\definecolor{darkblue}{rgb}{0.0,0.0,0.6}
\newtheorem{assumption}{Assumption}
\newtheorem{definition}{Definition}
\newtheorem{lem}{Lemma}
\newtheorem{corrolary}{Corrolary}
\newtheorem{theorem}{Theorem}
\newtheorem{remark}{Remark}
\newcommand{\setassumptiontag}[1]{
  \let\oldtheassumption\theassumption
  \renewcommand{\theassumption}{#1}
  \g@addto@macro\endassumption{
    \addtocounter{assumption}{-1}
    \global\let\theassumption\oldtheassumption}
  }
\newcommand{\E}{\mathrm{E}}
\newcommand{\R}{\mathrm{Re}}
\newcommand{\Proj}{\Pi}
\def\blab{\boldsymbol{\lambda}}
\def\R{\mathbb{R}}
\def\Wb{\mathbf{W}}
\def\zb{\mathbf{z}}
\def\bx{\boldsymbol{x}}
\def\ab{\boldsymbol{a}}
\def\mb{\boldsymbol{m}}
\def\s{\sigma}
\def\g{\gamma}
\def\bmu{\boldsymbol{\mu}}
\def\hbmu{\hat{\boldsymbol{\mu}}}
\def\kb{\boldsymbol{k}}
\def\lb{\boldsymbol{l}}
\def\gb{\boldsymbol{g}}
\def\R{\mathbb{R}}
\def\Wb{\boldsymbol{W}}
\def\zb{\mathbf{z}}
\def\xb{\boldsymbol{x}}
\def\bmu{\boldsymbol{\mu}}
\def\Mb{\boldsymbol{M}}
\def\Qb{\boldsymbol{Q}}
\def\bla{\boldsymbol{\lambda}}
\def\Rb{\boldsymbol{R}}
\def\Sb{\boldsymbol{S}}
\def\Ab{\boldsymbol{A}}
\def\r{~}
\def\BibTeX{{\rm B\kern-.05em{\sc i\kern-.025em b}\kern-.08em
		T\kern-.1667em\lower.7ex\hbox{E}\kern-.125emX}}
	\title{Convergence Rate of Payoff-based Generalized Nash Equilibrium Learning }
	\author{Tatiana Tatarenko, Maryam Kamgarpour  \IEEEmembership{Member, IEEE}
    \thanks{T. Tatarenko is with TU Darmstadt, Control Methods and Intelligent Systems Lab (e-mail: tatiana.tatarenko@tu-darmstadt.de).}
		\thanks{M. Kamgarpour  is with EPFL Sycamore Lab (e-mail: maryam.kamgarpour@epfl.ch). }}
\begin{document}

\maketitle
\thispagestyle{empty}
\pagestyle{empty}

\begin{abstract}
We consider generalized Nash equilibrium (GNE) problems in games with strongly monotone pseudo-gradients and jointly linear coupling constraints.  We establish the convergence rate of a payoff-based approach intended to learn a variational GNE (v-GNE) in such games. 
While convergent algorithms have recently been proposed in this setting given full or partial information of the gradients, rate of convergence in the payoff-based information setting has been an open problem. Leveraging properties of a game extended from the original one by a dual player, we establish a convergence rate of $O(\frac{1}{t^{4/7}})$ to a v-GNE of the game.

\end{abstract}

\section{Introduction}\label{sec:intro}

A generalized Nash equilibrium (GNE) problem models settings where multiple decision makers have coupled objective and constraint functions. The applicability of GNE problems spans from economics to engineering systems \cite{facchinei2007generalized}.  Several works have focused on characterizing and computing a solution to the GNE problem given knowledge of the cost functions and constraints, see \cite{jordan2023first} for a recent overview of algorithms.  Our focus here is on computing a solution of a class of GNE problems without prior knowledge on players' cost or constraint functions. We assume only access to the so-called ``payoff"/zeroth-order  information. Namely, players can evaluate their cost and constraint functions based on a  chosen joint action. Our goal is to characterize the rate at which one can learn  a GNE solution based on these function evaluations.

Learning equilibria given payoff-based information  is a well-established field in game theory \cite{pradelski2012learning}. The motivation is that in many practical multi-agent settings, the agents do not know functional form of the objectives and constraints and can only access the values of their respective functions at a played action. Such situations arise, for example, in electricity markets (unknown price functions or constraints), network routing (unknown traffic demands/constraints), and sensor coverage problems (unknown density function on the mission space). Even in the cases where cost and constraint functions are known, a payoff-based approach can be computationally advantageous due to complexity of computing gradients. This latter motivation is in line with works on derivative-free or zeroth-order optimization.

Most past work on computing solution of a GNE problem assumes availability of the gradient information. The general approach to solve this problem class in convex games is to cast it as an equivalent quasi-variational inequality (QVI) problem. In the case in which all agents share the same coupling constraint, a subset of GNE solutions, referred to as variational GNE (v-GNE),  can be found through a variational inequality (VI) problem. Moreover, a v-GNE is shown to be a refinement of the GNE and to possess economic merit~\cite{KULKARNI2012}.  The focus of early algorithm for GNE problems has been on proving \textit{asymptotic} convergence rather than characterizing \textit{convergence rates}~\cite{facchinei2007generalized}. Recently, \cite{jordan2023first,meng2023linear} proposed algorithms for GNE problems with monotone pseudo-gradients and  linear coupling constraints, and established the convergence rates of their respective algorithms. The work of \cite{meng2023linear}  considered the case of all players sharing the same constraint function and derived a primal dual approach with a linear convergence rate, given a strongly monotone pseudo-gradient and full row rank of the constraint's matrix. On the other hand, \cite{jordan2023first} considered the more general class of GNEs where agents can have individually different coupling constraints, derived a convergent algorithm and established its convergence rate.
However, the notion of approximate solution in the work above does not imply convergence of the iterates to a GNE solution. Furthermore,  both of the above works required knowledge of the constraints and the costs' gradients.

As for payoff-based approaches, for games \textit{without} coupling constraints, past work had established convergence rates  under strong monotonicity of the game pseudo-gradient \cite{drusvyatskiy2022improved}  or strong variational stability of equilibria \cite{TatKamECC24}. While GNE problems with jointly convex constraints can be cast as a VI problem, unfortunately, the resulting VI will not be strongly monotone and the results from the papers above do not apply.
For GNE problems with jointly convex coupling constraints,  
\cite{tat_kam_TAC}  derived a payoff-based convergent algorithm. However, that work assumed that the game admits a potential function. The potential function assumption  simplifies the analysis as it enables one to cast the problem as a zeroth-order constrained optimization problem. The work of \cite{suli} extended this setting to include non-potential games with jointly linear constraints, and presented a Tikhonov regularization approach, similar to those proposed in \cite{tatarenko2019learning} for non-strongly monotone games. However, \cite{suli} did not establish a convergence rate for the algorithm. To the best of our knowledge, deriving convergence rate of payoff-based learning in a GNE problem had not been addressed.

We analyze convergence rate of the payoff-based  algorithm for GNE problems under the standard assumptions of strongly monotone cost functions with Lipschitz continuous gradients and linear coupling constraints. Given jointly linear constraints, the VI formulation of our problem exhibits monotonicity. For general monotone variational inequalities, establishing a convergence rate of iterates is not possible without very specific additional  assumptions~\cite[Chapter 5.8 ]{bakushinsky2012ill}.  To obtain the convergence rate,  we derive a property of the pseudo-gradient in the extended game (with a dual player) which allows us to: 
1) estimate the convergence rate of the primal part solution to the regularized problem (distance between $\ab^*_t$ and $\ab^*$, Lemma~\ref{lem:epsApprox}); 2) balance the regularization term with the stochastic sampling and step size parameters so as to bound the iteration distance to the v-GNE (Theorem~\ref{th:main}), which results in the convergence rate of the order $O(1/t^{4/7})$ (Corollary~\ref{cor:1}). 
To the best of our knowledge, our work is the first to establish the convergence rate  of the payoff-based algorithm in a GNE problem formulated for strongly-monotone games with affine shared constraints.

\subsection{Notations. }
The set $\{1,\ldots,N\}$ is denoted by $[N]$. We consider real normed space $\R^D$ and denote by $\R_{+}^D$ its non-negative halfspace. The column vector $\xb\in\R^D$ is denoted by $\xb = [x^1,\ldots, x^D]$. We use superscripts to denoted coordinates of vectors and the player-related functions and sets. 
For any function $f:K\to\R$, $K\subseteq\R^d$, $\nabla_{x^i} f(\xb) = \frac{\partial f(\xb)}{\partial x^i}$ is the partial derivative taken in respect to the $x^i$th variable (coordinate) in the vector argument $\xb\in\R^D$.
 We use $\langle \cdot,\cdot\rangle$ to denote the inner product in $\R^D$.
We use $\|\cdot\|$ to denote the Euclidean norm induced by the standard dot product in $\R^D$. 
A mapping $g:\R^D\to \R^D$ is said to be \emph{strongly monotone} on $Q\subseteq \R^D$ with the constant $\nu$, if for any $u, v\in Q$, $\langle g(u)-g(v), u - v\rangle \ge \nu\|u - v\|^2$; \emph{strictly monotone}, if the strict inequality holds for $\nu=0$ and $u\neq v$, and \emph{merely monotone} if the inequality holds for $\nu=0$.
We use $\Proj_{\Omega}{v}$ to denote the projection of $v\in \R^D$ onto a set $\Omega\subseteq \R^D$.
The  expectation of a random value $\xi$ is denoted by $\E\{\xi\}$. Its conditional expectation with respect to some $\sigma$-algebra $\EuScript F$ is denoted by $\E\{\xi|\EuScript F\}$.
We use the big-$O$ notation, that is, the function $f(x): \R\to\R$ is $O(g(x))$ as $x\to a$ for some $a\in\R$, i.e. $f(x)$ = $O(g(x))$ as $x\to a$, if $\lim_{x\to a}\frac{|f(x)|}{|g(x)|}\le K$ for some positive constant $K$.

\section{Game Settings}
We consider a non-cooperative game $\Gamma$ between $N$ players. Let player $i$, $i \in [N]$, choose its local action from $\R^{d_i}$ and $J^i: \R^D\to\R$ with $D=\sum_{i=1}^Nd_i$ denote her cost function. We consider a \emph{coupling constraint set} $C$ defined by the following system of linear inequalities: 
\begin{align}\label{eq:coupled_cs}
C = \{\ab\in\R^D: \gb(\ab)=K\ab-\lb\le \boldsymbol{0}\},
\end{align}
where $K\in\R^{n\times D}$ is a matrix with columns   $\kb^{i,j} \in \R^n$, for $j\in [d_i]$, $i\in[N]$, $\lb\in\R^{n}$, $\ab = [\ab^1,\ldots,\ab^N]\in\R^D$, with $\ab^i = [a^{i,1},\ldots,a^{i,d_i}]\in\R^{d_i}$ being the action of the player $i$ and $\ab^{-i}$ denoting the joint action of all players except for the player $i$. 
Thus, the joint action set in the game $\Gamma$ is $C$ and the coupled action set of the agent $i$ by ${C}^i(\ab^{-i}) = \{\ab^i\in\R^{d_i}: K\ab-\lb\le \boldsymbol{0}\}$. 

A \emph{generalized Nash equilibrium} (GNE) in a game $\Gamma$ with coupled actions represents a joint action from which no player has any incentive to unilaterally deviate.
\begin{definition}\label{def:GNE}
 A joint action $\ab^*\in  C$ in the game $\Gamma=\Gamma(N, \{J^i\}, C)$ with the coupling constraints $C$ is called a \emph{generalized Nash equilibrium} (GNE) if $J^i(\ab^{*,i},\ab^{*,-i})\le J^i(\ab^i,\ab^{*,-i})$ for any $i\in[N]$ and $\ab^i\in C^i(\ab^{*,-i})$.
 
Observe that if $C=\R^{D}$ then $ C^i(\ab^{-i})=\{\ab^i: \ab^i\in \R^{d_i}\}$ and any $\ab^*$ for which the above inequality holds is a \emph{Nash equilibrium (NE)}.
 \end{definition}

We consider convex games as defined below. 
\begin{assumption}\label{assum:convex}
 The game under consideration is \emph{convex}. Namely, for all $i\in[N]$ the cost function $J^i(\ab^i, \ab^{-i})$ is defined on $\R^{D}$, continuously differentiable in $\ab$ and convex in $\ab^i$ for  fixed $\ab^{-i}$. 
\end{assumption}

Given differentiable cost functions, we define the game's pseudo-gradient mapping.
\begin{definition}\label{def:mapping}
The mapping $\Mb = [\Mb^1, \ldots, \Mb^N]:\R^{D}\to\R^{D}$, referred to as  the \emph{pseudo-gradient} of  $\Gamma(N,  \{J^i\}, C)$ is defined by
 \begin{align}\label{eq:gamemapping}
 &\Mb^i(\ab) = [M^{i,1}(\ab), \ldots, M^{1,d_i}(\ab)]^{\top}, \cr
 &M^{i,j}(\ab)= \frac{\partial J^i(\ab)}{\partial a^{i,j}}, \; \ab\in  C, j\in[d_i], i\in[N].
 \end{align}
\end{definition}

As we deal with the generalized Nash equilibrium problem in games with shared constraints, the set of \emph{variational generalized Nash equilibria} can be defined as follows \cite{facchinei2007generalized}. 
\begin{definition}\label{def:VGNE}
    The set of \emph{variational generalized Nash equilibria  (v-GNE)} in the game $\Gamma$ is defined as the set of solutions to the following variational inequality problem:
    \begin{align}\label{eq:GVI}
    \mbox{Find }\ab^*\in  C: \, \langle\Mb(\ab^*),\ab-\ab^*\rangle\ge 0, \,\mbox{ for all }\ab\in\EuScript C .\tag{$VI_{(\Mb, C)}$}
\end{align}
\end{definition}
The next lemma states the relation between GNE and v-GNE (see \cite{facchinei2007generalized} for the proof). 
\begin{lem}\label{lem:GNEandVGNE}
    Under Assumption\r\ref{assum:convex}, the set of generalized Nash equilibria in the game $\Gamma$ contains the set of variational generalized Nash equilibria, i.e. v-GNE $\subseteq$ GNE\footnote{Without loss of generality we assume that for any set $\EuScript S$ the following holds: $\emptyset\subseteq \EuScript S$, where $\emptyset$ denotes an empty set.}.   
\end{lem}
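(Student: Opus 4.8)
The plan is to take an arbitrary v-GNE $\ab^*$ and show directly that it satisfies the defining inequality of a GNE for every player; the case in which the set of v-GNE is empty is covered by the convention recalled in the footnote, so I may assume it is nonempty. First I would fix a player $i\in[N]$ and an arbitrary $\ab^i\in C^i(\ab^{*,-i})$, and form the joint profile $\tilde\ab = (\ab^i,\ab^{*,-i})$. By the definition of the coupled action set, $\ab^i\in C^i(\ab^{*,-i})$ means $K\tilde\ab-\lb\le\boldsymbol 0$, i.e. $\tilde\ab\in C$, so $\tilde\ab$ is an admissible test point in the variational inequality \eqref{eq:GVI}.

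Next I would substitute $\ab=\tilde\ab$ into $\langle\Mb(\ab^*),\ab-\ab^*\rangle\ge 0$. Since $\tilde\ab$ and $\ab^*$ coincide in every block except the $i$-th, the vector $\tilde\ab-\ab^*$ is supported on the $i$-th block, whence, using Definition~\ref{def:mapping}, $\langle\Mb(\ab^*),\tilde\ab-\ab^*\rangle=\langle\Mb^i(\ab^*),\ab^i-\ab^{*,i}\rangle=\langle\nabla_{\ab^i}J^i(\ab^{*,i},\ab^{*,-i}),\ab^i-\ab^{*,i}\rangle\ge 0$.

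Finally I would invoke Assumption~\ref{assum:convex}: the map $\ab^i\mapsto J^i(\ab^i,\ab^{*,-i})$ is continuously differentiable and convex, and the slice $C^i(\ab^{*,-i})=\{\ab^i:K(\ab^i,\ab^{*,-i})-\lb\le\boldsymbol 0\}$ is convex, being cut out by affine inequalities. For a differentiable convex function over a convex set, the inequality $\langle\nabla_{\ab^i}J^i(\ab^{*,i},\ab^{*,-i}),\ab^i-\ab^{*,i}\rangle\ge 0$ holding for all $\ab^i$ in the set is exactly the first-order characterization of $\ab^{*,i}$ being a global minimizer of $J^i(\cdot,\ab^{*,-i})$ over $C^i(\ab^{*,-i})$; hence $J^i(\ab^{*,i},\ab^{*,-i})\le J^i(\ab^i,\ab^{*,-i})$ for all $\ab^i\in C^i(\ab^{*,-i})$. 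Since $i$ was arbitrary, $\ab^*$ satisfies Definition~\ref{def:GNE}, so $\ab^*\in$ GNE, which proves v-GNE $\subseteq$ GNE.

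There is no genuine obstacle here; the argument is a routine unpacking of the definitions. The single point that deserves care is the passage from the ``global'' constraint set $C$ appearing in \eqref{eq:GVI} to the ``sliced'' constraint sets $C^i(\ab^{*,-i})$ appearing in Definition~\ref{def:GNE}: the bridge is to restrict attention to test points of the variational inequality that deviate from $\ab^*$ in only a single block, which is legitimate precisely because $(\ab^i,\ab^{*,-i})\in C$ whenever $\ab^i\in C^i(\ab^{*,-i})$. The reverse inclusion is false in general, which is why only one direction is asserted.
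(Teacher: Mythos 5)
Your proof is correct and is essentially the standard argument: the paper itself gives no proof but defers to the cited survey of Facchinei and Kanzow, whose reasoning is exactly yours — test the variational inequality \eqref{eq:GVI} at single-block deviations $(\ab^i,\ab^{*,-i})\in C$ and combine the resulting first-order inequality with the convexity of $J^i(\cdot,\ab^{*,-i})$ from Assumption~\ref{assum:convex} to conclude optimality over $C^i(\ab^{*,-i})$. Nothing is missing.
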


In this work, we focus on  games with strongly monotone pseudo-gradients. 
\begin{assumption}\label{assum:strmon}
The pseudo-gradient $\Mb$ is \emph{strongly monotone} over $\R^D$, i.e. there exists a constant $\nu>0$ such that $\langle\Mb(\ab_1)-\Mb(\ab_2),\ab_1-\ab_2\rangle\ge\nu\|\ab_1-\ab_2\|^2$ for any $\ab_1,\ab_2\in\R^D$.
\end{assumption}
Moreover, we consider the following basic constraint qualification condition.  
  \begin{assumption}\label{assum:Slaters}
The set $ C$ satisfies the \emph{Slater's constraint qualification}. 
\end{assumption}

\subsection{Formulation as an unconstrained game}
To be able to deal with the coupling constraints in the game $\Gamma$, we follow the approach of \cite{tat_kam_TAC} and define an augmented game $\Gamma^{\blab}$, for which we introduce a virtual player, a so-called \emph{dual} player indexed by $N+1$, whose action corresponds to the dual variable $\blab\in\R^{n}_{+}$. Furthermore, we extend the cost functions of the initial \emph{(primal)} players to consider them Lagrangian functions. Thus, given a shared constraints defined by a convex set $C = \{\ab: \gb(\ab)\le \boldsymbol{0}\}$, the augmented game $\Gamma^{\blab}$ contains $N+1$ players, and the cost function $U^{i}$ of each primal player $i\in[N]$ is defined as follows: 
\begin{align}\label{eq:J^{pr}r}
U^{i}(\ab,\blab) = J^i(\ab) + \langle \blab, \gb(\ab)\rangle.
\end{align}
The cost function of the dual player $N+1$ is
\begin{align}\label{eq:J_du}
U^{N+1}(\ab,\blab) = -  \langle \blab, \gb(\ab)\rangle.
\end{align}
The joint action set in $\Gamma^{\blab}$ is denoted by $\Ab^{\blab} = \R^D\times\R^n_{+}$, where $\R^n_{+}$ is the action set of the dual player. Hence, $\Gamma^{\blab} = \Gamma^{\blab}(N+1, \{U^{i}\}, \Ab^{\blab})$ is a game with uncoupled constraints. We denote an action of this game by $[\ab,\blab]$ and refer to $\ab\in\R^D$ and $\blab\in\R^{n}_{+}$ as to its \emph{primal and dual part} respectively.  

Next, we formulate the result linking solutions in the initial game $\Gamma$ to the solutions in  game $\Gamma^{\blab}$.
\begin{lem}\label{lem:exist_uncoupled}
 Let $\Gamma(N, \{J^i\}, C)$ be a game with shared constraints defined by a convex set $C = \{\ab: \, \gb(\ab)\le 0\}$. 
 Then,
 \begin{itemize}
 \item[1)] given Assumptions~\ref{assum:convex} and~\ref{assum:Slaters},  if $[\ab^*,\bla^*]$ is an NE in $\Gamma^{\blab}$, then $\ab^*$ is a v-GNE of $\Gamma$. Moreover, if $\ab^*$ is a v-GNE of $\Gamma$, then there exists $\blab^*\in\R^n_{+}$ such that $[\ab^*,\bla^*]$ is an NE in $\Gamma^{\blab}$,
 \item[2)] given Assumptions~\ref{assum:convex},~\ref{assum:strmon} and~\ref{assum:Slaters}, there exists an NE in $\Gamma^{\blab}$,
 \item[3)] given Assumption~\ref{assum:strmon}, if $[\ab_1^*,\bla_1^*]$ and $[\ab_2^*,\blab_2^*]$ are two NEs in $\Gamma^{\blab}$, then $\ab_1^* = \ab_2^*$,
 \item[4)] for any NE $[\ab^*, \bla^*]$ in $\Gamma^{\blab}$ there exists a constant $\Lambda > 0$ such that $\|\bla^*\|\le \Lambda$.
 \end{itemize}
 \end{lem}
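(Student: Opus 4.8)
The plan is to prove the four parts of Lemma~\ref{lem:exist_uncoupled} by leveraging the structure of the augmented game $\Gamma^{\blab}$ and standard results from variational inequality theory and convex analysis.

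\textbf{Part 1) Equivalence of NE in $\Gamma^{\blab}$ and v-GNE of $\Gamma$.} First I would write down the first-order optimality (equilibrium) conditions for the augmented game. Since $\Gamma^{\blab}$ has uncoupled constraints (the primal players optimize over $\R^D$, the dual player over $\R^n_+$), a joint action $[\ab^*,\blab^*]$ is an NE if and only if each $\ab^{*,i}$ minimizes $U^i(\cdot,\ab^{*,-i},\blab^*)$ over $\R^{d_i}$ and $\blab^*$ minimizes $U^{N+1}(\ab^*,\cdot)$ over $\R^n_+$. By convexity of $J^i$ and linearity of $\langle\blab,\gb(\ab)\rangle$ in $\ab^i$ (Assumption~\ref{assum:convex} plus $\gb$ affine), the primal conditions read $\Mb(\ab^*)+K^{\top}\blab^* = \boldsymbol{0}$; the dual player's condition is the standard complementarity/projection condition $\blab^*\ge\boldsymbol{0}$, $\gb(\ab^*)\le\boldsymbol{0}$, $\langle\blab^*,\gb(\ab^*)\rangle=0$. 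These are precisely the KKT conditions for the VI problem $(VI_{(\Mb,C)})$ in Definition~\ref{def:VGNE}. Under Slater's condition (Assumption~\ref{assum:Slaters}), KKT points of the VI coincide with solutions of the VI, which gives both implications. I would cite \cite{facchinei2007generalized} for the KKT--VI equivalence under Slater, keeping the argument short.

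\textbf{Parts 2) and 3) Existence and uniqueness of the primal part.} For existence, under Assumptions~\ref{assum:convex}, \ref{assum:strmon}, \ref{assum:Slaters}, the VI $(VI_{(\Mb,C)})$ with $\Mb$ strongly monotone and $C$ nonempty closed convex has a (unique) solution $\ab^*$ by the classical existence theorem for strongly monotone VIs; then Part 1) produces a corresponding $\blab^*$, hence an NE of $\Gamma^{\blab}$ exists. For uniqueness of the primal part, suppose $[\ab_1^*,\blab_1^*]$ and $[\ab_2^*,\blab_2^*]$ are two NEs. By Part 1) both $\ab_1^*,\ab_2^*$ solve $(VI_{(\Mb,C)})$; writing the VI inequality for $\ab_1^*$ tested at $\ab_2^*$ and vice versa, adding, and invoking strong monotonicity of $\Mb$ yields $\nu\|\ab_1^*-\ab_2^*\|^2\le 0$, so $\ab_1^*=\ab_2^*$. (Note $\blab^*$ itself need not be unique, which is why only the primal part is claimed.)

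\textbf{Part 4) Boundedness of dual parts.} This is the step I expect to require the most care, since here we need a bound uniform over all NEs, not just for a fixed one. The primal optimality condition gives $K^{\top}\blab^* = -\Mb(\ab^*)$, and since by Part 3) the primal part $\ab^*$ is the same for every NE, the right-hand side is a fixed vector; combined with complementarity and Slater's condition one obtains a bound on $\|\blab^*\|$ depending only on game data. Concretely, I would use the standard argument: let $\bar\ab$ be a Slater point with $\gb(\bar\ab)<\boldsymbol 0$, so $\delta:=\min_k (-g_k(\bar\ab))>0$; from the Lagrangian stationarity and the characterization of the primal NE as a minimizer of $U^i(\cdot,\ab^{*,-i},\blab^*)$, evaluate/estimate $\sum_k \lambda^{*,k}(-g_k(\bar\ab))$ to get $\delta\|\blab^*\|_1 \le \langle\blab^*,-\gb(\bar\ab)\rangle \le \langle\Mb(\ab^*),\bar\ab-\ab^*\rangle$, and the right side is a fixed finite number because $\ab^*$ is fixed. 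Setting $\Lambda$ to this bound completes the proof. The only subtlety is correctly deriving the inequality $\langle\blab^*,-\gb(\bar\ab)\rangle\le \langle\Mb(\ab^*),\bar\ab-\ab^*\rangle$ from convexity of $J^i$ and the stationarity condition, which is routine but should be spelled out or cited.
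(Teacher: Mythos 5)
Your proposal is correct and follows essentially the same route as the paper's proof, which simply cites Proposition 1.3.4 of \cite{FaccPang1} for the VI--KKT correspondence in 1), unique solvability of the strongly monotone $VI_{(\Mb,C)}$ for 2)--3), and Lemma 5.1 of \cite{ZhuFrazzoli} for the Slater-based dual bound in 4); you spell out the standard arguments behind those citations. One minor caveat: your bound in 4) invokes part 3) (hence Assumption~\ref{assum:strmon}) to fix $\ab^*$, whereas the result cited by the paper needs only Slater's condition --- harmless here, since strong monotonicity is assumed throughout, but strictly your version of 4) is slightly less general than the stated one.
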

The proof is provided in Appendix~\ref{app:exist_uncoupled}.
The above lemma implies that one can aim to find a solution (NE) to the game $\Gamma^{\blab}$ with uncoupled action sets in the search for a GNE in the initial game $\Gamma$, that is,  by solving $\Gamma^{\blab}$, one achieves a v-GNE. Moreover, under Assumption~\ref{assum:strmon}, as stated above, this v-GNE exists and is unique. 

Let us now take into account the  specific setting of  affine shared constraints, i.e. $\gb(\ab) = K\ab-\lb$.
In this case the pseudo-gradient $\Wb = [\Wb^{1},\ldots,\Wb^{N},\Wb^{N+1}]:\R^{D+n}\to\R^{D+n}$ of the game $\Gamma^{\blab}$ is defined as follows.
 \begin{align}\label{def:mapping_ext}
  &\Wb^{i}(\ab, \blab)=[W^{i,1}(\ab, \blab), \ldots, W^{i,d_i}(\ab, \blab)], \cr
 &\qquad\mbox{where }W^{i,j}(\ab,\blab)= M^{i,j}(\ab) + \frac{\partial \langle\bla, K\ab-\lb\rangle}{\partial a^{i,j}}\cr
 &= M^{i,j}(\ab) + \langle \blab,\kb^{i,j}\rangle ,\; j\in[d_i], \quad i\in[N],\cr
 &\Wb^{N+1}(\ab, \blab)= -K\ab+\lb.
 \end{align}
\begin{remark}
The above pseudo-gradient and thus, $T^\lambda$, is monotone but not strongly monotone. 
\end{remark}
In the following analysis we will need to analyze the  part of the mapping $\Wb$ corresponding to the primal players, namely: 
\begin{align}\label{eq:primalM}
\Wb^{pr}(\ab,\blab) = [\Wb^{1}(\ab,\blab),\ldots,\Wb^{N}(\ab,\blab)]\in\R^D.
\end{align}

We notice that, according to~\eqref{def:mapping_ext}, the pseudo-gradient $\Wb$ of the augmented game $\Gamma^{\blab}$ is linear in the dual action $\blab$. Thus, $\Gamma^{\blab}$ is a convex game and its Nash equilibria  are characterized by solutions to the following variational inequality \cite{FaccPang1}:  Find $[\ab^*,\blab^*]\in\Ab^{\blab}$ such that
\begin{align}\label{eq:VI1}\tag{$VI$}
    \langle\Wb(\ab^*,\blab^*),[\ab,\blab]-[\ab^*,\blab^*]\rangle\ge 0, \, \forall [\ab,\blab]\in\Ab^{\blab} .
\end{align} 
However, the pseudo-gradient $\Wb$ of the game $\Gamma^{\blab}$ in~\eqref{def:mapping_ext} is not strongly monotone in the game's joint action set $\Ab^{\blab}$. On the other hand, due to the \emph{linear structure of the coupling constraints}, it is straightforward to verify that the following relation holds for any two actions $ [\ab_1,\blab_1]\in\Ab^{\blab}$, $[\ab_2,\blab_2]\in\Ab^{\blab}$ of the augmented game $\Gamma^{\blab}$: 
\begin{align}\label{eq:aug-psedo}
    \langle\Wb(\ab_1,\blab_1)-\Wb(\ab_1,\blab_1),[\ab_1,\blab_1]&-[\ab_2,\blab_2]\rangle \cr&\ge \nu\|\ab_1-\ab_2\|^2.
\end{align}
The relation above is essential for providing the analysis of the payoff-based procedure presented in the next sections. 

\subsection{Regularized extended game properties}
In the next section, we will present a payoff-based procedure learning a Nash equilibrium in the game $\Gamma$ which coincides with the primal part of a Nash equilibrium in the augmented game $\Gamma^{\blab}$ with uncoupled actions. Thus, we will develop a procedure solving the variational inequality~\eqref{eq:VI1}. However, we can not use a procedure imitating standard gradient descent one, as the pseudo-gradient $\Wb$ is not strongly monotone, but merely monotone (see~\eqref{eq:aug-psedo}). For this reason, we will regularize the corresponding mapping. 

We will use the idea of Tikhonov regularization  and add the time-varying term $\varepsilon_t I_{0,n}$ to the mapping $\Wb$, where $I_{0,n}\in\R^{D+n\times D+n}$ is a diagonal matrix with the first $D$ zero-elements and the last $n$ positive elements on the diagonal. We let the positive elements be equal to $1$. Thus, the following regularized pseudo-gradient is constructed: 
$$    \Wb_t(\ab,\blab) = \Wb(\ab,\blab) + [\underbrace{0,\ldots, 0}_{D}, \varepsilon_t\blab].$$
Due to strong monotonicity of $\Mb(.)$ from Assumption~\ref{assum:strmon}, for any $ [\ab_1,\blab_1]\in\Ab^{\blab}$, $[\ab_2,\blab_2]\in\Ab^{\blab}$ the following relation holds:
$\langle\Wb_t(\ab_1,\blab_1)-\Wb_t(\ab_2,\blab_2),[\ab_1,\blab_1]-[\ab_2,\blab_2]\rangle \ge \nu\|\ab_1-\ab_2\|^2 + \varepsilon_t\|\blab_1-\blab_2\|$.
Thus, the regularized mapping is strongly monotone and one can use the idea of the standard gradient descent step to solve the following regularized variational inequality:   Find $[\ab_t^*,\blab_t^*]\in\Ab^{\blab}$ such that
\begin{align}\label{eq:VI2} 
 \langle\Wb_t(\ab_t^*,\blab_t^*),[\ab,\blab]-[\ab_t^*,\blab_t^*]\rangle\ge0, \,\forall [\ab,\blab]\in\Ab^{\blab} \tag{$VI_t$}.
\end{align}
Before characterizing the solutions to the variational inequalities above, we make  the following assumption on the cost functions' behavior at infinity.
\begin{assumption}	\label{assum:infty}
	Each function $J^{i}(\bx) = O(\|\bx\|^{2})$ as $\|\bx\|\to\infty$.
\end{assumption}
\begin{remark}\label{rem:Lip}
    The assumption above is required, as we will consider a payoff-based approach to v-GNE learning, and to this end, will estimate gradients based on randomized sampling, basesd on a 
    a distribution with an unbounded support. Moreover, Assumption~\ref{assum:infty} will play an important role in proving boundedness of the iterates in the learning procedure.
    Observe that  Assumption~\ref{assum:infty} is equivalent to the assumption that the functions $\Mb^i$, $i\in[N]$, are \emph{Lipschitz continuous on $\R^{D}$} with some constant $L>0$. The latter assumption is standard in the literature on distributed generalized Nash equilibria seeking \cite{jordan2023first}.
\end{remark}
 
The following lemma characterizes the relation between solutions of ~\eqref{eq:VI1} and the solution of ~\eqref{eq:VI2}. 
\begin{lem}\label{lem:epsApprox}
    Let the mapping $\Wb(\ab,\blab)$ be as in~\eqref{def:mapping_ext} with $\Mb(\ab)$ satisfying Assumptions~\ref{assum:strmon} and~\ref{assum:infty}. 
    Let $[\ab^*,\blab^*]$ be a minimal norm solution to~\eqref{eq:VI1} and $[\ab^*_t,\blab^*_t]$ be the solution to~\eqref{eq:VI2}. Then, as $\varepsilon_t\to 0$, $[\ab^*_t,\blab^*_t]$ converges to $[\ab^*,\blab^*]$ and, moreover,
    $$ \|\ab^*-\ab^*_t\|\le \varepsilon_t\frac{\|\blab^*\|L}{\|K\|\nu}.$$
\end{lem}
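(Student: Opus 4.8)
The plan is to exploit the standard Tikhonov-regularization machinery for monotone variational inequalities, but to squeeze out a quantitative bound on the primal part by using the special property~\eqref{eq:aug-psedo} of the extended pseudo-gradient. First I would write down the two VI characterizations. Since $[\ab^*_t,\blab^*_t]$ solves~\eqref{eq:VI2}, plugging in the feasible point $[\ab^*,\blab^*]$ gives $\langle\Wb_t(\ab^*_t,\blab^*_t),[\ab^*,\blab^*]-[\ab^*_t,\blab^*_t]\rangle\ge 0$, i.e. $\langle\Wb(\ab^*_t,\blab^*_t)+[0,\varepsilon_t\blab^*_t],[\ab^*,\blab^*]-[\ab^*_t,\blab^*_t]\rangle\ge 0$. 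Since $[\ab^*,\blab^*]$ solves~\eqref{eq:VI1}, plugging in $[\ab^*_t,\blab^*_t]$ gives $\langle\Wb(\ab^*,\blab^*),[\ab^*_t,\blab^*_t]-[\ab^*,\blab^*]\rangle\ge 0$. Adding the two inequalities and using the relation~\eqref{eq:aug-psedo} (monotonicity of $\Wb$ together with the fact that its primal strong-monotonicity residual is $\nu\|\ab^*-\ab^*_t\|^2$) yields
\begin{align*}
\nu\|\ab^*-\ab^*_t\|^2 \le \langle\Wb(\ab^*_t,\blab^*_t)-\Wb(\ab^*,\blab^*),[\ab^*,\blab^*]-[\ab^*_t,\blab^*_t]\rangle \le \varepsilon_t\langle\blab^*_t,\blab^*-\blab^*_t\rangle.
\end{align*}

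Next I would bound the right-hand side. A crude bound $\langle\blab^*_t,\blab^*-\blab^*_t\rangle\le\langle\blab^*_t,\blab^*\rangle\le\|\blab^*_t\|\,\|\blab^*\|$ would still involve $\|\blab^*_t\|$, so instead I would argue $\langle\blab^*_t,\blab^*-\blab^*_t\rangle=\langle\blab^*,\blab^*-\blab^*_t\rangle-\|\blab^*-\blab^*_t\|^2\le\langle\blab^*,\blab^*-\blab^*_t\rangle$, hence $\nu\|\ab^*-\ab^*_t\|^2\le\varepsilon_t\langle\blab^*,\blab^*-\blab^*_t\rangle$. This already gives qualitative convergence once we know $\{[\ab^*_t,\blab^*_t]\}$ stays bounded and its dual part approaches the minimal-norm $\blab^*$; the boundedness and the selection of the minimal-norm solution as the limit are the classical facts about Tikhonov-regularized monotone VIs (using Assumptions~\ref{assum:strmon},~\ref{assum:infty} to ensure existence and the needed coercivity), and I would cite them rather than re-derive them.

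The quantitative bound requires replacing $\langle\blab^*,\blab^*-\blab^*_t\rangle$ by something proportional to $\|\ab^*-\ab^*_t\|$. Here is where the structure of $\Wb$ enters. The dual component of the VI~\eqref{eq:VI2} at $[\ab^*_t,\blab^*_t]$, together with the dual component of~\eqref{eq:VI1} at $[\ab^*,\blab^*]$, compares $\Wb^{N+1}=-K\ab+\lb$ at the two points; combining these with the primal first-order conditions $\Wb^{pr}(\ab^*_t,\blab^*_t)=\zz$ on the interior (or the appropriate KKT/complementarity relations for the dual player) lets me relate $\blab^*-\blab^*_t$ to $\Mb(\ab^*)-\Mb(\ab^*_t)$ through the matrix $K$. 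Concretely, from $\Wb^{pr}=\Mb(\ab)+K^{\top}\blab$ one gets $K^{\top}(\blab^*-\blab^*_t)=-(\Mb(\ab^*)-\Mb(\ab^*_t))+(\varepsilon_t\blab^*_t\text{-type correction})$, so that, projecting appropriately and using $\|\Mb(\ab^*)-\Mb(\ab^*_t)\|\le L\|\ab^*-\ab^*_t\|$ (Remark~\ref{rem:Lip}) and a lower bound $\|K^{\top}v\|\ge$ (something)$\cdot\|v\|$ on the relevant subspace, one obtains $\langle\blab^*,\blab^*-\blab^*_t\rangle\le\|\blab^*\|\,\|\blab^*-\blab^*_t\|\le\|\blab^*\|\frac{L}{\|K\|}\|\ab^*-\ab^*_t\|$. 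Substituting into $\nu\|\ab^*-\ab^*_t\|^2\le\varepsilon_t\langle\blab^*,\blab^*-\blab^*_t\rangle$ and cancelling one factor of $\|\ab^*-\ab^*_t\|$ gives the claimed $\|\ab^*-\ab^*_t\|\le\varepsilon_t\frac{\|\blab^*\|L}{\|K\|\nu}$.

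The main obstacle I anticipate is making the last step — relating $\blab^*-\blab^*_t$ to $\ab^*-\ab^*_t$ via $K$ — fully rigorous, because $K$ need not have full row rank, so $K^{\top}$ is not injective and one cannot simply invert it. The resolution is that $\blab^*$ and $\blab^*_t$ are not arbitrary: the minimal-norm selection forces $\blab^*$ to lie in the range of $K$ (orthogonal to $\ker K^{\top}$), and the regularization term $\varepsilon_t\blab$ similarly pushes $\blab^*_t$ toward that subspace, on which $K^{\top}$ is bounded below by its smallest nonzero singular value; equivalently, one works with $\langle\blab^*,\blab^*-\blab^*_t\rangle$ directly and only needs the component of $\blab^*-\blab^*_t$ along $\blab^*$, which does live in the good subspace. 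I would need to verify that the constant coming out of this argument is exactly $L/(\|K\|\nu)$ as stated — i.e. that the paper is (perhaps loosely) using $\|K\|$ where the cleaner bound would involve the smallest nonzero singular value of $K$ — and either reproduce their constant or note the discrepancy; but the order in $\varepsilon_t$ is robust to these details.
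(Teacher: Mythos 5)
Your proposal follows essentially the same route as the paper's own proof: summing the two variational-inequality relations at $[\ab^*,\blab^*]$ and $[\ab^*_t,\blab^*_t]$, invoking~\eqref{eq:aug-psedo}, replacing $\langle\blab^*_t,\blab^*-\blab^*_t\rangle$ by $\langle\blab^*,\blab^*-\blab^*_t\rangle$, and then using the primal stationarity $\Mb(\ab)+K^{\top}\blab=\zz$ at both solutions together with the Lipschitz constant $L$ of $\Mb$ to trade $\|\blab^*-\blab^*_t\|$ for $\|\ab^*-\ab^*_t\|$. The concern you flag about the constant is precisely where the paper itself is loose: from $K^{\top}(\blab^*-\blab^*_t)=\Mb(\ab^*_t)-\Mb(\ab^*)$ one only gets $\|\Mb(\ab^*)-\Mb(\ab^*_t)\|\le\|K\|\,\|\blab^*-\blab^*_t\|$, whereas the paper asserts equality and uses it in the opposite direction, so the clean version of the bound should involve the smallest (nonzero) singular value of $K^{\top}$ together with the range/minimal-norm argument you sketch, exactly as you suspected.
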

Our proof is provided in Appendix~\ref{app1}.

The lemma above estimates the  rate with which the primal part of the  solution to the regularized~\eqref{eq:VI2} converges to the primal part of the solution to the variational inequality~\eqref{eq:VI1}. We emphasize here that such characterization is obtained due to  relation~\eqref{eq:aug-psedo} which holds, if the pseudo-gradient of the game $\Gamma$ is strongly monotone. 

The next lemma connects solutions of two consecutive ~\eqref{eq:VI2} and will be  leveraged for our convergence rate derivation. 
\begin{lem}\label{lem:t_vs_t-1}
Let the mapping $\Wb(\ab,\blab)$ be as in~\eqref{def:mapping_ext} with $\Mb(\ab)$ satisfying Assumption~\ref{assum:strmon}. 
    Let $[\ab^*_t,\blab^*_t]$ and $[\ab^*_{t-1},\blab^*_{t-1}]$ be solutions to~\eqref{eq:VI2} and $(VI_{t-1})$ respectively. Then the following relation holds: 
    $
        \|\ab^*_t-\ab^*_{t-1}\|^2 = O\left(\frac{(\varepsilon_t-\varepsilon_{t-1})^2}{\varepsilon_{t}}\right)$, $        \|\blab^*_t-\blab^*_{t-1}\|^2 = O\left(\frac{(\varepsilon_t-\varepsilon_{t-1})^2}{\varepsilon^2_{t}}\right)$.
\end{lem}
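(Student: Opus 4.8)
The plan is to use the variational characterizations of $[\ab^*_t,\blab^*_t]$ and $[\ab^*_{t-1},\blab^*_{t-1}]$ as solutions to~\eqref{eq:VI2} and $(VI_{t-1})$ respectively, test each inequality with the other's solution, and add them. From~\eqref{eq:VI2} at time $t$, plugging in $[\ab,\blab]=[\ab^*_{t-1},\blab^*_{t-1}]$:
\begin{align}\label{eq:pf_t_vs_tm1_a}
\langle\Wb_t(\ab^*_t,\blab^*_t),[\ab^*_{t-1},\blab^*_{t-1}]-[\ab^*_t,\blab^*_t]\rangle\ge 0,
\end{align}
and from $(VI_{t-1})$ with $[\ab,\blab]=[\ab^*_t,\blab^*_t]$:
\begin{align}\label{eq:pf_t_vs_tm1_b}
\langle\Wb_{t-1}(\ab^*_{t-1},\blab^*_{t-1}),[\ab^*_t,\blab^*_t]-[\ab^*_{t-1},\blab^*_{t-1}]\rangle\ge 0.
\end{align}
Adding~\eqref{eq:pf_t_vs_tm1_a} and~\eqref{eq:pf_t_vs_tm1_b} and using $\Wb_t = \Wb + \varepsilon_t I_{0,n}(\cdot)$, I would split the resulting inner product into the common part $\langle\Wb_t(\ab^*_t,\blab^*_t)-\Wb_t(\ab^*_{t-1},\blab^*_{t-1}),[\ab^*_{t-1},\blab^*_{t-1}]-[\ab^*_t,\blab^*_t]\rangle$ plus the leftover term $\langle(\Wb_t-\Wb_{t-1})(\ab^*_{t-1},\blab^*_{t-1}),[\ab^*_{t-1},\blab^*_{t-1}]-[\ab^*_t,\blab^*_t]\rangle$. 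The first term, by strong monotonicity of $\Wb_t$ established right before Lemma~\ref{lem:epsApprox}, is bounded above by $-\nu\|\ab^*_t-\ab^*_{t-1}\|^2-\varepsilon_t\|\blab^*_t-\blab^*_{t-1}\|^2$; the second term equals $(\varepsilon_t-\varepsilon_{t-1})\langle\blab^*_{t-1},\blab^*_{t-1}-\blab^*_t\rangle$ since $I_{0,n}$ only acts on the dual block. This yields the key inequality
\begin{align}\label{eq:pf_t_vs_tm1_key}
\nu\|\ab^*_t-\ab^*_{t-1}\|^2+\varepsilon_t\|\blab^*_t-\blab^*_{t-1}\|^2\le (\varepsilon_t-\varepsilon_{t-1})\langle\blab^*_{t-1},\blab^*_{t-1}-\blab^*_t\rangle.
\end{align}

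From here I would bound the right side by Cauchy–Schwarz: $(\varepsilon_t-\varepsilon_{t-1})\langle\blab^*_{t-1},\blab^*_{t-1}-\blab^*_t\rangle\le |\varepsilon_t-\varepsilon_{t-1}|\,\|\blab^*_{t-1}\|\,\|\blab^*_t-\blab^*_{t-1}\|$. To turn this into the claimed bounds I need a uniform bound on $\|\blab^*_{t-1}\|$ over $t$; this should follow from part 4) of Lemma~\ref{lem:exist_uncoupled} applied to the regularized game (whose pseudo-gradient is strongly monotone, so its unique NE has a bounded dual part), with the bound uniform in $t$ because the regularization only adds $\varepsilon_t\|\blab\|^2/2$-type terms that can only decrease the dual norm — I would want to make this precise, perhaps bounding $\|\blab^*_t\|$ by the same $\Lambda$ as for $\|\blab^*\|$ via a comparison argument, or at least by a constant independent of $t$ (for $\varepsilon_t$ in a bounded range). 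Call this bound $\Lambda$. Then~\eqref{eq:pf_t_vs_tm1_key} gives $\varepsilon_t\|\blab^*_t-\blab^*_{t-1}\|^2\le \Lambda|\varepsilon_t-\varepsilon_{t-1}|\,\|\blab^*_t-\blab^*_{t-1}\|$, hence $\|\blab^*_t-\blab^*_{t-1}\|\le \Lambda|\varepsilon_t-\varepsilon_{t-1}|/\varepsilon_t$, which is the second claim. Substituting back, $\nu\|\ab^*_t-\ab^*_{t-1}\|^2\le \Lambda|\varepsilon_t-\varepsilon_{t-1}|\cdot\Lambda|\varepsilon_t-\varepsilon_{t-1}|/\varepsilon_t = \Lambda^2(\varepsilon_t-\varepsilon_{t-1})^2/\varepsilon_t$, giving the first claim.

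The main obstacle I anticipate is establishing the uniform-in-$t$ bound $\|\blab^*_t\|\le\Lambda$: unlike part 4) of Lemma~\ref{lem:exist_uncoupled}, which is stated for the unregularized game, here I must argue the dual parts of the regularized NEs stay bounded as $\varepsilon_t$ varies (and in particular as $\varepsilon_t\to 0$). One clean way is to invoke Lemma~\ref{lem:epsApprox}, which already asserts $[\ab^*_t,\blab^*_t]\to[\ab^*,\blab^*]$ as $\varepsilon_t\to 0$; combined with continuity/monotonicity this gives boundedness of $\{\blab^*_t\}$ for $\varepsilon_t$ in any bounded interval, so $\sup_t\|\blab^*_t\|<\infty$ and we may take $\Lambda$ accordingly. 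A secondary, purely bookkeeping point is the sign/ordering in the leftover term: one must be careful that the term multiplying $(\varepsilon_t-\varepsilon_{t-1})$ is evaluated at $\blab^*_{t-1}$ (not $\blab^*_t$), which is exactly what the asymmetric choice of test points above produces; this is why the final bounds are stated with $\varepsilon_t$ in the denominator (coming from the coefficient of $\|\blab^*_t-\blab^*_{t-1}\|^2$ on the left, which is $\varepsilon_t$, the later regularization parameter). Everything else is Cauchy–Schwarz and dividing through, so no further difficulty is expected.
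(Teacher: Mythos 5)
Your proposal is correct and follows essentially the same route as the paper's proof: sum the two variational inequalities $(VI_t)$ and $(VI_{t-1})$ tested at each other's solutions, use the strong-monotonicity relation~\eqref{eq:aug-psedo} together with the regularization terms to arrive at exactly the paper's inequality~\eqref{eq:sum2}, then apply Cauchy--Schwarz and boundedness of $\|\blab^*_{t-1}\|$. The uniform bound on $\|\blab^*_{t-1}\|$ that you flag as the main obstacle is precisely what the paper invokes via the inequality $\|\blab^*-\blab^*_t\|\le\|\blab^*\|$ already established in the proof of Lemma~\ref{lem:epsApprox} (combined with assertion 4) of Lemma~\ref{lem:exist_uncoupled}), so no additional argument is needed.
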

See Appendix~\ref{app1} for the proof.

Next,  we will develop a \emph{payoff-based} procedure to learn a Nash equilibrium in the \emph{convex (not strongly) monotone} augmented game $\Gamma^{\blab}$ and characterize its convergence rate.

\section{Payoff-based Learning algorithm}\label{sec:procedure}

The \emph{payoff-based} settings assume the following information in the system: each player $i$ in the initial game $\Gamma$ has access to the value of her cost function $J^i(t) := J^i(\ab(t))$ at every time step $t$, evaluated at given the corresponding joint action $\ab(t)$. Moreover, the value of the constraint function, that is, the value $\gb(t):=K\ab(t)-\lb$, is revealed to each player $i\in[N]$ at every $t$. Note that neither the joint action $\ab(t)$ nor the closed form of the cost and shared constraint functions are available to the agents.

\subsubsection{Algorithm iterates}
For each time $t$,  let us denote the actions of the primal player $i$ and the dual player $N+1$ in the game $\Gamma^{\bla}$,  by $\ab^i(t)$ and $\blab(t)$ respectively. Let $\mb^i(t)$ be some estimate of the pseudo-gradient's elements of the primal players in the game $\Gamma^{\bla}$ at time $t$. In words, $\mb^i(t)$ estimates the vector $\Wb^i(\ab(t),\blab(t))$, $i\in[N]$ (see~\eqref{def:mapping_ext}).

In the payoff-based approach, denote  the iterates corresponding to the primal player $i$, $i\in[N]$, and the dual player $N+1$ be denoted by  $\bmu^i$  and  $\blab$. These iterates are updated  as follows:
\begin{align}
	\label{eq:alg}
	&\bmu^i(t+1)=\bmu^i(t)-\gamma_t\mb^i(t),\\ \nonumber
        &\blab(t+1)=\Proj_{\R^{n}_{+}}[\blab(t)-\gamma_t(-\gb(t)+\varepsilon_t\blab(t))],
   \end{align}
where $\bmu^i(0)\in \R^{d_i}$, $\blab(0)\in\R^{n}_{+}$ are  arbitrary finite vectors, $\g_t$ is the step size, and the choice of $\ab(t)$ will be described below).

Note that according to the discussion in the previous section, the procedure in~\eqref{eq:alg} mimics the gradient descent play in  game $\Gamma^{\blab}$ and intends to achieve a Nash equilibrium in it. The regularization term $\varepsilon_t\blab(t)$ is added only in the update of the dual players. The step size $\gamma_t$ should be chosen based on the regularization parameter of the game, as well as the bias and variance of the pseudo-gradient estimate $\mb^i(t)$. The term $\mb^i(t)$ is obtained using payoff-based feedback as described below.

\subsubsection{Action choice and gradient estimations}
We estimate the unknown gradients using randomized sampling technique. In particular, we use the Gaussian distribution for sampling inspired by \cite{Thatha,NesterovSpokoiny}. 
Given $\bmu^i(t)$, let the primal player $i$ sample her action as the random vector $\ab^i(t)$ according to the multivariate normal distribution $\EuScript N(\bmu(t) = [\mu^{i,1}(t),\ldots,\mu^{i,d_i}(t)]^{\top},\sigma_t)$ with the  density function:
\begin{align}\label{eq:density}
	p^i&(\bx^i;\bmu^i(t),\sigma_{t})= \frac{1}{(\sqrt{2\pi}\sigma_{t})^{d_i}}e^{-\sum_{k=1}^{d_i}\frac{(x^{i,k}-\mu^{i,k}(t))^2}{2\sigma^2_{t}}}.
\end{align}
According to the information setting, the value of the cost function in the game $\Gamma^{\blab}$ at $\ab(t)$ and $\bla(t)$ (see definition~\eqref{eq:J^{pr}r}), denoted by $U^{i}(t): =U^{i}(\ab(t),\blab(t))=J^i(\ab(t)) + \langle \blab(t), \gb(\ab(t))\rangle$,
is revealed to each primal player $i\in[N]$. 

{\begin{remark}
    Note that the value $U^{i}(t)$ is available to the agent $i$, since, according to the information settings,  agent $i$ has access to the values $J^i(t) = J^i(\ab(t))$ and $\gb(t)=\gb(\ab(t))$. On the other hand,   the dual vector $\blab(t)$ is either broadcast by a central coordinator (virtual dual player) or is locally updated by each primal player $i\in[N]$, given $\blab^i(0) = \blab(0)$ for any player $i$ (indeed, given the same initial dual vector $\blab(0)$ for any player $i$, the iteration \eqref{eq:alg} can be run locally by each primal player resulting in a common dual vector  $\blab(t)$ for each such player at each time $t$).
\end{remark} }

In this work, we intend to  guarantee a bounded variance of gradient estimations and, thus, following  the discussion in~\cite{duchi2015optimal}, we focus on the so called two-point setting for gradient estimations at each iteration $t$. This means that each player $i$ makes two queries: a query corresponding to the chosen action $\ab(t)$ and another query of the cost function $J^i$ and the constraint function $g^i$ at the point
$\bmu(t)$.
Hence, there is an extra piece of information available to each player, namely the value of the cost function evaluated at the mean of the distribution: 
$
U^{i}_0(t): = U^{i}(\bmu(t),\blab(t)) = J^i(\bmu(t)) + \langle \blab(t), \gb(\bmu(t))\rangle
$.
Then, each player uses the following estimation of the local pseud-gradient $\Mb^i(\cdot)$ at the point  $\bmu(t)$:
	\begin{align}\label{eq:est_Gd2}
		\mb^i(t) = (U^{i}(t) - U^{i}_0(t))\frac{{\ab^i(t)} -\bmu^i(t)}{\sigma^2_t}.
	\end{align}

\subsubsection{Properties of the  gradient estimators}
We provide insight into the procedure defined by Algorithm~\eqref{eq:alg}  by deriving an analogy to a  stochastic gradient descent algorithm. Denote 
\begin{align}\label{eq:densityfull}
p( \bx; \bmu, \sigma)=\prod_{i=1}^{N}p^i(\bx^i;\bmu^i,\sigma)
\end{align}
as the  density function of the joint distribution of players' query points $\ab$, given  $\bmu =[\bmu^1,\ldots,\bmu^N]$ (see~\eqref{eq:density} for the definition of the individual density function $p^i$). For any $\sigma > 0$, $i\in[N]$, and given any $\blab\in\R^{n}_{+}$ define $ \tilde{U}^{i}_{\sigma}(\bmu,\blab) : \R^{D} \rightarrow \R$ as
$
\tilde{U}^{i}_{\sigma} (\bmu,\blab)= \int_{\mathbb R^{D}}U^{i}(\bx,\blab)p( \bx; \bmu, \sigma)d\bx$.
Thus, $\tilde{U}^{i}_{\sigma}$, $i\in[N]$, is the $i$th primal player's cost function in the mixed strategies of the game $\Gamma^{\blab}$, where the strategies are sampled from the Gaussian distribution with the density function in~\eqref{eq:densityfull}.
For $i\in[N]$ define $\tilde{\Wb}^{i}_{\sigma} (\cdot)=[\tilde W^{i,1}_{\sigma}(\bmu), \ldots, \tilde W^{i,d_i}_{\sigma}(\bmu)]^{\top}$
as the $d_i$-dimensional mapping with the following elements:
$
\tilde W^{i,k}_{\sigma} (\bmu)=\frac{\partial {\tilde U^{i}_{\sigma}(\bmu,\blab)}}{\partial \mu^{i,k}}$, $k\in[d_i]$.

Next, let $\Rb^{i}(t) = \mb^{i}(t) - \tilde{\Wb}^{i}_{\sigma_t} (\bmu(t),\blab(t))$
and $\Qb^i(t)=\tilde\Wb^{i}_{\sigma_t}(\bmu(t))-\Wb^{i}(\bmu(t))$, $i\in[N]$, evaluate the bias between the  pseudo-gradient, its counterpart in the mixed strategies, and its estimation. With the above definitions, the joint update rule of  primal players in~\eqref{eq:alg}  is equivalent to:
\begin{align}
\label{eq:pbavmuQ}
\bmu(t+1) =\bmu(t)& -\gamma_t\big({\Wb}^{pr}(\bmu(t),\blab(t))+\Qb(t)+\Rb(t)\big).
\end{align}
In the above, we use the notation $\Wb^{pr}(\cdot,\cdot)$ defined in~\eqref{eq:primalM} as well as  the notations $\bmu(t) = [\bmu^1(t),\ldots,\bmu^N(t)]$, $\Qb(t) = [\Qb^1(t),\ldots,\Qb^N(t)]$,  and $\Rb(t) = [\Rb^1(t),\ldots,\Rb^N(t)]$.
In the next section, devoted to convergence analysis of Algorithm~\eqref{eq:alg}, we will use  representation~\eqref{eq:pbavmuQ} for the updates of the primal-related iterates comprised by $\bmu(t)$. According to~\eqref{eq:alg}, the dual-related iterates, namely ${\blab}(t)$, is updated as follows, given $\Sb(t) =  K(\bmu(t) - \ab(t))$: 
\begin{align}\label{eq:dual}
	\blab(t+1)=\Proj_{\R^{n}_{+}}[\blab(t)-\gamma_t(-K\bmu(t) &+ \Sb(t) + \lb+\varepsilon_t\blab(t))].
   \end{align}

Now, we present the main properties of the terms  $\Qb(t)$, $\Sb(t)$,  and $\Rb(t)$.
Let $\EuScript F_{t}$ be the $\sigma$-algebra generated by the random variables $\{\bmu(k),\blab(k),\ab(k)\}_{k\le t}$. The next lemmas characterize the stochastic term $\Rb^i(t)$, $i\in[N]$.
\begin{lem}\label{lem:sample_grad}
\cite[Lemma~1]{TatKamECC24} Given Assumptions\r\ref{assum:convex} and~\ref{assum:infty}, almost surely $\Rb^{i}(t) = {\mb}^{i}(t) - \E\{{\mb}^{i}(t)|\EuScript F_t\}$ and, thus,
$\E\{\Rb^{i}(t)|\EuScript F_t\} = 0$, $i\in[N]$.
\end{lem}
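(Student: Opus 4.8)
The plan is to identify $\E\{\mb^i(t)\mid\EuScript F_t\}$ with $\tilde\Wb^i_{\sigma_t}(\bmu(t),\blab(t))$; since by definition $\Rb^i(t) = \mb^i(t) - \tilde\Wb^i_{\sigma_t}(\bmu(t),\blab(t))$, both assertions of the lemma then follow at once. Conditioned on $\EuScript F_t$, the iterates $\bmu(t)$ and $\blab(t)$ are fixed, and the only source of randomness in $\mb^i(t)$ from~\eqref{eq:est_Gd2} is the query point $\ab(t)$, drawn from the Gaussian density $p(\cdot;\bmu(t),\sigma_t)$ of~\eqref{eq:densityfull} independently of $\EuScript F_t$. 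Writing $U^i(\cdot,\blab(t)) = J^i(\cdot) + \langle\blab(t),K\cdot-\lb\rangle$, this yields
\begin{align*}
\E\{\mb^i(t)\mid\EuScript F_t\} = \int_{\R^D}\bigl(U^i(\bx,\blab(t)) - U^i(\bmu(t),\blab(t))\bigr)\,\frac{\bx^i-\bmu^i(t)}{\sigma_t^2}\,p(\bx;\bmu(t),\sigma_t)\,d\bx.
\end{align*}

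The second ingredient is the Gaussian score identity $\frac{x^{i,k}-\mu^{i,k}}{\sigma^2}\,p(\bx;\bmu,\sigma) = \frac{\partial}{\partial\mu^{i,k}}p(\bx;\bmu,\sigma)$, obtained by differentiating~\eqref{eq:density}. I would split the integral above into two terms. In the term containing the constant $U^i(\bmu(t),\blab(t))$, the remaining factor integrates to zero, since $\int_{\R^D}(x^{i,k}-\mu^{i,k}(t))\,p(\bx;\bmu(t),\sigma_t)\,d\bx = 0$ because $\mu^{i,k}(t)$ is the mean of the corresponding marginal Gaussian. In the term containing $U^i(\bx,\blab(t))$, the score identity turns the integrand into $U^i(\bx,\blab(t))\,\frac{\partial}{\partial\mu^{i,k}}p(\bx;\bmu(t),\sigma_t)$; pulling the $\mu^{i,k}$-derivative outside the integral gives $\frac{\partial}{\partial\mu^{i,k}}\tilde U^i_{\sigma_t}(\bmu(t),\blab(t))$, which is the $k$-th component of $\tilde\Wb^i_{\sigma_t}(\bmu(t),\blab(t))$ by the definitions of $\tilde U^i_\sigma$ and $\tilde\Wb^i_\sigma$. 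Collecting the coordinates $k\in[d_i]$ yields $\E\{\mb^i(t)\mid\EuScript F_t\} = \tilde\Wb^i_{\sigma_t}(\bmu(t),\blab(t))$, as desired.

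The only nontrivial point is the exchange of $\frac{\partial}{\partial\mu^{i,k}}$ with the integral, and this is exactly where Assumptions~\ref{assum:convex} and~\ref{assum:infty} enter. Continuous differentiability of $J^i$, hence of $U^i(\cdot,\blab(t))$, supplies the pointwise partial derivative of the integrand, while the quadratic growth $J^i(\bx) = O(\|\bx\|^2)$ guarantees that $U^i(\bx,\blab(t))\,\frac{\partial}{\partial\mu^{i,k}}p(\bx;\bmu,\sigma_t)$ is, uniformly for $\bmu$ in a neighborhood of $\bmu(t)$, dominated in absolute value by an integrable function (a polynomial in $\bx$ times a Gaussian), so that differentiation under the integral sign is justified by dominated convergence; the same bound makes all the integrals above finite. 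Since this computation, with the explicit domination estimates, is carried out in \cite[Lemma~1]{TatKamECC24}, I would cite that reference for the technical verification rather than reproduce it here.
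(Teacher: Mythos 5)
Your proposal is correct: the paper itself offers no in-text proof of this lemma (it simply defers to \cite[Lemma~1]{TatKamECC24}), and your computation --- conditioning on $\EuScript F_t$ so that only $\ab(t)$ is random, applying the Gaussian score identity $\frac{x^{i,k}-\mu^{i,k}}{\sigma^2}p = \frac{\partial p}{\partial \mu^{i,k}}$, killing the $U^i_0$ term by symmetry, and differentiating under the integral via the quadratic-growth domination from Assumption~\ref{assum:infty} --- is exactly the standard argument that the cited lemma encapsulates, yielding $\E\{\mb^i(t)\mid\EuScript F_t\}=\tilde\Wb^i_{\sigma_t}(\bmu(t),\blab(t))$ and hence both assertions. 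No gaps; this is essentially the same route as the reference the paper relies on.
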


\begin{lem}\label{lem:Rsq}
Under Assumptions~\ref{assum:convex} and~\ref{assum:infty},  given $\lim_{t\to\infty}\sigma_t = 0$, 
$\E\{\|\Rb^{i}(t)\|^2 | \EuScript F_t\} =  O(f^i(\bmu(t),\blab(t))$,
  where $f^i(\bmu(t),\blab(t))$ is a function with a quadratic dependence on both $\bmu(t)$ and $\blab(t)$.
 \end{lem}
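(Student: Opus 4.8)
\emph{Sketch of the proof of Lemma~\ref{lem:Rsq}.}
The plan is to bound $\E\{\|\Rb^i(t)\|^2\mid\EuScript F_t\}$ by the conditional second moment of the estimator $\mb^i(t)$ itself and then to control that moment explicitly. By Lemma~\ref{lem:sample_grad}, $\Rb^i(t)=\mb^i(t)-\E\{\mb^i(t)\mid\EuScript F_t\}$, so $\E\{\|\Rb^i(t)\|^2\mid\EuScript F_t\}\le\E\{\|\mb^i(t)\|^2\mid\EuScript F_t\}$ and it suffices to show the latter is $O\big(1+\|\bmu(t)\|^2+\|\blab(t)\|^2\big)$. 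Recall that $\mb^i(t)=(U^i(t)-U^i_0(t))\,(\ab^i(t)-\bmu^i(t))/\sigma_t^2$, so the crux is to extract from $U^i(t)-U^i_0(t)$ a factor proportional to $\|\ab(t)-\bmu(t)\|$, with a prefactor that is at most linear in $\|\bmu(t)\|$ and $\|\blab(t)\|$; such a factor will cancel the $1/\sigma_t^2$ once we integrate against the Gaussian sampling noise.

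First I would use the affine form $\gb(\ab)=K\ab-\lb$ to write $U^i(t)-U^i_0(t)=\big(J^i(\ab(t))-J^i(\bmu(t))\big)+\langle\blab(t),K(\ab(t)-\bmu(t))\rangle$. Setting $\zb:=\ab(t)-\bmu(t)$, which conditionally on $\EuScript F_t$ is $\EuScript N(\zz,\sigma_t^2 I_D)$ with independent blocks, the second term is bounded by $\|K\|\,\|\blab(t)\|\,\|\zb\|$ via Cauchy--Schwarz. For the first term I would invoke the at-most-linear growth of $\nabla J^i$ that follows from Assumption~\ref{assum:infty} (cf.\ Remark~\ref{rem:Lip}): the fundamental theorem of calculus then gives $|J^i(\ab(t))-J^i(\bmu(t))|\le \|\zb\|\big(\ell_1\|\bmu(t)\|+\ell_1\|\zb\|+\ell_2\big)$ for suitable constants $\ell_1,\ell_2$. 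Combining the two bounds and using $\|\ab^i(t)-\bmu^i(t)\|\le\|\zb\|$ yields a pointwise estimate of the form $\|\mb^i(t)\|^2\le C\,\|\zb\|^4\big(1+\|\bmu(t)\|^2+\|\blab(t)\|^2+\|\zb\|^2\big)/\sigma_t^4$. Taking $\E\{\cdot\mid\EuScript F_t\}$ and using the standard Gaussian moment estimates $\E\{\|\zb\|^4\mid\EuScript F_t\}=O(\sigma_t^4)$ and $\E\{\|\zb\|^6\mid\EuScript F_t\}=O(\sigma_t^6)$, the powers of $\sigma_t$ cancel and we obtain $\E\{\|\mb^i(t)\|^2\mid\EuScript F_t\}=O\big(1+\|\bmu(t)\|^2+\|\blab(t)\|^2+\sigma_t^2\big)$. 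Since $\sigma_t\to0$ the last term is bounded, so the claim holds with $f^i(\bmu,\blab)=1+\|\bmu\|^2+\|\blab\|^2$, which depends quadratically on each argument.

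The main obstacle is exactly the estimate $|U^i(t)-U^i_0(t)|=O\!\big(\|\zb\|\,(1+\|\bmu(t)\|+\|\blab(t)\|)\big)$. The naive bound $|J^i(\ab(t))-J^i(\bmu(t))|\le|J^i(\ab(t))|+|J^i(\bmu(t))|$ is of the right (quadratic) order in the iterates but is missing the factor $\|\zb\|$, and without that factor the $1/\sigma_t^2$ in $\mb^i(t)$ would make $\E\{\|\mb^i(t)\|^2\mid\EuScript F_t\}$ of order $1/\sigma_t^2\to\infty$; it is precisely here that the smoothness consequence of Assumption~\ref{assum:infty} must be used, and this same step is what pushes the quadratic dependence on $\bmu(t)$ (through $\nabla J^i$) and on $\blab(t)$ (through the coupling term $\langle\blab(t),K\zb\rangle$) into the final bound. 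Everything after that — squaring, the inequality $(a+b+c+d)^2\le4(a^2+b^2+c^2+d^2)$, and the Gaussian moment computations — is routine.
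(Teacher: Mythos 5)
Your proposal is correct and follows essentially the same route as the paper's proof: reduce to $\E\{\|\mb^i(t)\|^2\mid\EuScript F_t\}$ via Lemma~\ref{lem:sample_grad}, exploit the mean-value/fundamental-theorem argument with the linear growth of the gradient implied by Assumption~\ref{assum:infty} and the affine form of $\gb$ to extract the factor $\|\ab(t)-\bmu(t)\|$, and let the Gaussian moments cancel the $1/\sigma_t^2$, yielding a bound quadratic in $\bmu(t)$ and $\blab(t)$. The only difference is cosmetic: the paper separates the two random factors via H\"older and then applies the Mean Value Theorem, whereas you keep the estimate pointwise and invoke the fourth and sixth Gaussian moments explicitly, which is an equally valid (arguably more careful) bookkeeping of the same argument.
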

See Appendix~\ref{app:Rsq} for the proof.

We emphasize that the statement above is due to quadratic dependence of the cost functions at infinity (Assumption~\ref{assum:infty}). We will use the  estimate for the term $\|\Rb^{i}(t)\|^2$ provided by Lemma~\ref{lem:Rsq} in the proof of the iterates' uniform boundedness. 


\begin{lem}\label{lem:Sterm2moment}
\cite[Lemma~2]{tat_kam_TAC} Let Assumptions~\ref{assum:convex} and~\ref{assum:infty} hold. Choose $\sigma_t$ such that $\lim_{t\to\infty}\sigma_t = 0$. Then almost surely
$
    \E\{\|\Sb(t)\|^2 | \EuScript F_t\}  =O\left(\sigma_t^2\right),
$ and $\E\{\|\Qb^{i}(t)\|^2| \EuScript F_t\} = O(\s^2_t)$.
\end{lem}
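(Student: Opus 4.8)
The plan is to treat the two estimates separately, since they have different origins. For $\Sb(t)=K(\bmu(t)-\ab(t))$ the bound is essentially a one-line computation with Gaussian second moments: conditioned on $\EuScript F_{t}$, the iterate $\bmu(t)$ is $\EuScript F_{t}$-measurable while, by the sampling rule~\eqref{eq:density}, $\ab(t)-\bmu(t)$ is a centered Gaussian vector with covariance $\sigma_{t}^{2}I_{D}$; hence
\[
\E\{\|\Sb(t)\|^{2}\mid\EuScript F_{t}\}\le\|K\|^{2}\,\E\{\|\ab(t)-\bmu(t)\|^{2}\mid\EuScript F_{t}\}=\|K\|^{2}D\,\sigma_{t}^{2}=O(\sigma_{t}^{2}),
\]
because $K$ is a fixed matrix and $D$ a fixed dimension (one can equally write the middle quantity as $\sigma_{t}^{2}\Tr(K^{\top}K)$). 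No growth assumption is needed for this part.

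For $\Qb^{i}(t)=\tilde\Wb^{i}_{\sigma_{t}}(\bmu(t),\blab(t))-\Wb^{i}(\bmu(t),\blab(t))$ the first step I would carry out is to show that the affine (dual) part of the extended cost contributes nothing to the smoothing bias. Writing $U^{i}(\bx,\blab(t))=J^{i}(\bx)+\langle\blab(t),K\bx-\lb\rangle$ and differentiating $\tilde U^{i}_{\sigma_{t}}(\bmu,\blab(t))=\int_{\R^{D}}U^{i}(\bx,\blab(t))\,p(\bx;\bmu,\sigma_{t})\,d\bx$ under the integral sign (equivalently, applying Stein's identity coordinatewise, or reparametrizing $\bx=\bmu+\sigma_{t}\bz$ and passing the derivative onto $J^{i}$), and using that the Gaussian convolution of the affine map $\bx\mapsto\langle\blab(t),K\bx-\lb\rangle$ returns the map itself, one gets
\[
\tilde W^{i,j}_{\sigma_{t}}(\bmu(t),\blab(t))=\E_{\bx\sim\EuScript N(\bmu(t),\sigma_{t}^{2}I)}\{M^{i,j}(\bx)\}+\langle\blab(t),\kb^{i,j}\rangle .
\]
Comparing with $W^{i,j}(\bmu(t),\blab(t))=M^{i,j}(\bmu(t))+\langle\blab(t),\kb^{i,j}\rangle$ from~\eqref{def:mapping_ext}, the dual terms cancel, so $Q^{i,j}(t)=\E_{\bx}\{M^{i,j}(\bx)\}-M^{i,j}(\bmu(t))$; in particular $\Qb^{i}(t)$ does not depend on $\blab(t)$ and is exactly the Gaussian-smoothing bias of $\Mb^{i}$, which is why its bound carries no $\|\blab(t)\|$ factor, in contrast with the estimate for $\|\Rb^{i}(t)\|^{2}$ in Lemma~\ref{lem:Rsq}.

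The second step is to bound this bias using Lipschitz continuity of $\Mb^{i}$, which holds on all of $\R^{D}$ with some constant $L$ by Remark~\ref{rem:Lip} (equivalently, Assumption~\ref{assum:infty}). By Jensen's and the Cauchy--Schwarz inequalities,
\[
\|\Qb^{i}(t)\|=\bigl\|\E_{\bx}\{\Mb^{i}(\bx)-\Mb^{i}(\bmu(t))\}\bigr\|\le L\,\E_{\bx}\|\bx-\bmu(t)\|\le L\bigl(\E_{\bx}\|\bx-\bmu(t)\|^{2}\bigr)^{1/2}=L\sqrt{D}\,\sigma_{t},
\]
and, since $\Qb^{i}(t)$ is a deterministic function of $\bmu(t)$ and hence $\EuScript F_{t}$-measurable, this yields $\E\{\|\Qb^{i}(t)\|^{2}\mid\EuScript F_{t}\}=\|\Qb^{i}(t)\|^{2}\le L^{2}D\,\sigma_{t}^{2}=O(\sigma_{t}^{2})$, as claimed. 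I expect the only genuinely delicate point to be the justification of the interchange of differentiation and integration above (and the vanishing of boundary terms, if one argues by integration by parts): this is exactly where Assumption~\ref{assum:infty} — quadratic growth of $J^{i}$ at infinity, hence at most linear growth of $\Mb^{i}$ — combined with the rapid decay of the Gaussian density is used, along the lines of \cite{NesterovSpokoiny,tat_kam_TAC}. The hypothesis $\sigma_{t}\to0$ is not needed for these two bounds beyond fixing the asymptotic regime in which the $O(\cdot)$ is read; it is inherited from the cited statement and used elsewhere in the analysis.
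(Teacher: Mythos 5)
Your proof is correct, and since the paper does not prove this lemma itself but defers to \cite[Lemma~2]{tat_kam_TAC}, your argument is essentially the standard one used there: bounding $\E\{\|\Sb(t)\|^2\mid\EuScript F_t\}$ directly via the Gaussian second moment $\E\|\ab(t)-\bmu(t)\|^2 = D\sigma_t^2$, and bounding $\Qb^{i}(t)$ as the Gaussian smoothing bias of $\Mb^{i}$ (the affine dual term cancels) via the Lipschitz constant from Remark~\ref{rem:Lip}, with the interchange of differentiation and integration justified by Assumption~\ref{assum:infty} and Gaussian decay. No gaps; the only implicit reading you make — that conditioning on $\EuScript F_t$ treats $\bmu(t),\blab(t)$ as known and $\ab(t)$ as freshly sampled — is the intended interpretation of the filtration in the paper.
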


\section{Main result}
In this section we provide the main result on convergence of Algorithm~\eqref{eq:alg} and its convergence  rate. 
From the technical point of view, the novelty in analysis is in a four-step convergence rate estimation approach.
We use  Lemma~\ref{lem:Rsq} to conclude almost sure uniform boundedness of the algorithm's iterates $\zb(t)=[\bmu(t),\blab(t)]$ at the first step, whereas the next two steps of the proof focus on  estimating the expected distance between the vector $\zb(t)$ and the solution  of ~\eqref{eq:VI2}, namely, $\zb_t^*= [\ab^*_t, \blab^*_t]$. This and Lemma \ref{lem:t_vs_t-1} implies  the convergence rate of $\E\|\bmu(t) - \ab_{t-1}^*\|$. Finally, the last step leverages Lemma~\ref{lem:epsApprox} to conclude the convergence rate, in expectation, of the iterates $\bmu(t)$ to the unique v-GNE $\ab^*$.


\begin{theorem}\label{th:main}
Let Assumptions~\ref{assum:convex}-\ref{assum:infty} hold. Then there exists a unique v-GNE in  game $\Gamma$. Let the joint action $\ab^*$ be such v-GNE. 
Choose the parameters in Algorithm~\eqref{eq:alg} as follows:
$\gamma_t = \frac{G}{t^{g}}$, $\varepsilon_t=\frac{E}{t^{e}}$,  
      $\sigma_t=\frac{S}{t^{s}}$,
where $g,e,s>0$ with $s+g>1$, $g+e<1$, and $g>1/2$.
Let  $h = \min\{2-g-e,  g+s, 2g\}$.
Then for the iterates in Algorithm~\eqref{eq:alg} the following holds: 
$
    \E\|\bmu(t)-\ab^*\|^2 =O\left(\frac{1}{t^{\min\{2e,h-g\}}}\right)$.
\end{theorem}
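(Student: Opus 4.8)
The plan is to follow the four-step scheme outlined before the theorem, tracking a Lyapunov-type quantity $\E\|\zb(t)-\zb_t^*\|^2$ where $\zb(t)=[\bmu(t),\blab(t)]$ and $\zb_t^*=[\ab_t^*,\blab_t^*]$ is the solution of \eqref{eq:VI2}.

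\emph{Step 1: Uniform boundedness of the iterates.} First I would show that, almost surely, $\sup_t\|\zb(t)\|<\infty$. This uses the update representations \eqref{eq:pbavmuQ}--\eqref{eq:dual}, the strong monotonicity relation for the primal block and firm nonexpansiveness of the projection for the dual block, together with the bias bounds $\E\{\|\Qb^i(t)\|^2|\EuScript F_t\}=O(\sigma_t^2)$, $\E\{\|\Sb(t)\|^2|\EuScript F_t\}=O(\sigma_t^2)$ (Lemma~\ref{lem:Sterm2moment}) and the crucial quadratic-growth bound $\E\{\|\Rb^i(t)\|^2|\EuScript F_t\}=O(f^i(\bmu(t),\blab(t)))$ from Lemma~\ref{lem:Rsq}. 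Expanding $\|\zb(t+1)-\zz\|^2$ and conditioning on $\EuScript F_t$, the quadratic-in-$\zb(t)$ terms coming from $\Rb(t)$ are controlled because they are multiplied by $\gamma_t^2=O(t^{-2g})$ with $2g>1$, so a Robbins--Siegmund / supermartingale convergence argument (as in \cite{tat_kam_TAC,TatKamECC24}) gives almost sure boundedness; call the bound $\mathcal B$. Consequently $\E\{\|\Rb(t)\|^2|\EuScript F_t\}=O(1)$ on the event of boundedness, which I use freely thereafter.

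\emph{Step 2: One-step contraction toward $\zb_t^*$.} Next I would derive the recursion. Write $\zb(t+1)=\Proj_{\Ab^{\blab}}[\zb(t)-\gamma_t(\Wb_t(\zb(t))+\xi(t))]$ where $\xi(t)=[\Qb(t)+\Rb(t),\Sb(t)]$ is the total estimation error, and $\zb_t^*=\Proj_{\Ab^{\blab}}[\zb_t^*-\gamma_t\Wb_t(\zb_t^*)]$ by the VI characterization of \eqref{eq:VI2}. Using nonexpansiveness of the projection, expanding the square, and invoking the strong monotonicity of $\Wb_t$ (which gives $\langle\Wb_t(\zb(t))-\Wb_t(\zb_t^*),\zb(t)-\zb_t^*\rangle\ge\nu\|\bmu(t)-\ab_t^*\|^2+\varepsilon_t\|\blab(t)-\blab_t^*\|^2$), plus Lipschitz continuity of $\Wb$ (Remark~\ref{rem:Lip}) to bound $\|\Wb_t(\zb(t))-\Wb_t(\zb_t^*)\|^2$, I obtain, after taking conditional expectation and using $\E\{\Rb(t)|\EuScript F_t\}=0$ (Lemma~\ref{lem:sample_grad}),
\begin{align*}
\E\{\|\zb(t+1)-\zb_t^*\|^2|\EuScript F_t\}\le (1-c\,\gamma_t\varepsilon_t)\|\zb(t)-\zb_t^*\|^2 + C_1\gamma_t^2\sigma_t^2 + C_2\gamma_t^2,
\end{align*}
for constants $c,C_1,C_2>0$, where the $\gamma_t\varepsilon_t$ contraction factor is the weakest one (the dual block only has the $\varepsilon_t$-modulus); here I need $\gamma_t\varepsilon_t\to0$ slowly enough, guaranteed by $g+e<1$. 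The $O(\gamma_t^2)=O(t^{-2g})$ term is the residual variance of $\Rb(t)$, and $O(\gamma_t^2\sigma_t^2)=O(t^{-2g-2s})$ comes from the bias $\Qb,\Sb$.

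\emph{Step 3: From $\zb_t^*$ to $\zb_{t-1}^*$ and unrolling.} I then replace $\zb_t^*$ by $\zb_{t-1}^*$ using $\|\zb(t)-\zb_t^*\|^2\le(1+\delta_t)\|\zb(t)-\zb_{t-1}^*\|^2+(1+\delta_t^{-1})\|\zb_{t-1}^*-\zb_t^*\|^2$ and Lemma~\ref{lem:t_vs_t-1}, which gives $\|\zb_{t-1}^*-\zb_t^*\|^2=O((\varepsilon_t-\varepsilon_{t-1})^2/\varepsilon_t^2)=O(t^{-2}/t^{-2e})\cdot$(lower order)$=O(t^{-2}\varepsilon_t^{-2})$; with $\delta_t\sim\gamma_t\varepsilon_t$ this perturbation is $O(t^{-2}\varepsilon_t^{-2}(\gamma_t\varepsilon_t)^{-1})=O(t^{-2}\varepsilon_t^{-3}\gamma_t^{-1})$, which must be shown to be lower order than the driving terms — this is where the condition $h=\min\{2-g-e,g+s,2g\}$ and $s+g>1$, $g>1/2$ enter. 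Setting $u_t=\E\|\zb(t)-\zb_{t-1}^*\|^2$ (on the bounded event, then removing the conditioning via the a.s.\ bound $\mathcal B$ and dominated convergence), I get a scalar recursion $u_{t+1}\le(1-c'\gamma_t\varepsilon_t)u_t+O(t^{-h})$ and apply the standard lemma on such recursions (e.g.\ \cite[Lemma]{polyak}, or Chung's lemma) with $\sum\gamma_t\varepsilon_t=\infty$ to conclude $u_t=O(t^{h-g-e}\cdot t^{?})$ — more precisely $\E\|\bmu(t)-\ab_{t-1}^*\|^2=O(t^{-(h-g)})$ after accounting that the effective rate of the recursion with step $\gamma_t\varepsilon_t=GE\,t^{-(g+e)}$ and perturbation $O(t^{-h})$ is $O(t^{-(h-g-e)})$, and noting the primal block actually contracts at rate $\nu\gamma_t$ not $\gamma_t\varepsilon_t$, improving the primal part to $O(t^{-(h-g)})$.

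\emph{Step 4: Triangle inequality with Lemma~\ref{lem:epsApprox}.} Finally, $\|\bmu(t)-\ab^*\|\le\|\bmu(t)-\ab_{t-1}^*\|+\|\ab_{t-1}^*-\ab^*\|$, and Lemma~\ref{lem:epsApprox} gives $\|\ab_{t-1}^*-\ab^*\|\le\varepsilon_{t-1}\frac{\|\blab^*\|L}{\|K\|\nu}=O(\varepsilon_t)=O(t^{-e})$, so the squared distance contributes $O(t^{-2e})$. Combining with Step 3 yields $\E\|\bmu(t)-\ab^*\|^2=O(t^{-\min\{2e,\,h-g\}})$, as claimed.

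\emph{Main obstacle.} The delicate part is Step 3: carefully bounding the $\zb_{t-1}^*\to\zb_t^*$ drift against the $\gamma_t\varepsilon_t$ (merely $t^{-(g+e)}$, summable-complement) contraction, and verifying that all perturbation terms — variance $O(\gamma_t^2)$, bias $O(\gamma_t^2\sigma_t^2)$, and regularization path drift — are simultaneously dominated, which is exactly what the constraints $s+g>1$, $g+e<1$, $g>1/2$ and the definition of $h$ are engineered to ensure. A secondary subtlety is passing rigorously from the conditional a.s.\ boundedness of Step 1 to unconditional expectation bounds; this is handled by working on the full-probability event $\{\sup_t\|\zb(t)\|\le\mathcal B\}$ and using that all error moments are then uniformly bounded.
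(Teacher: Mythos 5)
Your outline follows the paper's own four-step scheme (almost-sure boundedness via the quadratic-growth bound of Lemma~\ref{lem:Rsq} and a stochastic-boundedness theorem; a one-step recursion toward the regularized solution $\zb_t^*$ of \eqref{eq:VI2}; shifting to $\zb_{t-1}^*$ via Lemma~\ref{lem:t_vs_t-1} and Chung's lemma; finishing with Lemma~\ref{lem:epsApprox}), but three details are off, and two of them would actually break the claimed exponent. First, the bias accounting in your Step~2 is wrong: the cross terms $-2\gamma_t\langle\Qb(t),\bmu(t)-\ab_t^*\rangle$ and $-2\gamma_t\langle\Sb(t),\blab(t)-\blab_t^*\rangle$ do \emph{not} vanish under $\E\{\cdot\,|\EuScript F_t\}$ (in particular $\Qb(t)$ is the deterministic bias, $\EuScript F_t$-measurable), so by Lemma~\ref{lem:Sterm2moment} they contribute $O(\gamma_t\sigma_t)$, not $O(\gamma_t^2\sigma_t^2)$. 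This first-order term is precisely the source of the component $g+s$ in $h$ and of the standing assumption $s+g>1$; your displayed recursion, if it were true, would replace $g+s$ by $2g+2s$ and prove a statement stronger than the theorem, which your manipulations cannot deliver. Second, your bookkeeping of the regularization drift is too loose: you bound the shifted term by $O(t^{-2}\varepsilon_t^{-3}\gamma_t^{-1})=O(t^{-(2-g-3e)})$, whereas one must use $\varepsilon_{t-1}-\varepsilon_t=O(t^{-(e+1)})$ so that $\bigl(1+\tfrac{2}{\gamma_t\varepsilon_t}\bigr)\|\blab_t^*-\blab_{t-1}^*\|^2=O\bigl(\tfrac{(\varepsilon_t-\varepsilon_{t-1})^2}{\gamma_t\varepsilon_t^3}\bigr)=O(t^{-(2-g-e)})$, which is what matches the $2-g-e$ entry of $h$. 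With your estimate, for the parameters of Corollary~\ref{cor:1} ($g=4/7$, $e=2/7$) the drift would be $O(t^{-4/7})$, not dominated by $t^{-h}=t^{-8/7}$, and the final rate would not follow.

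Third, the step you compress into ``the primal block actually contracts at rate $\nu\gamma_t$, improving the primal part to $O(t^{-(h-g)})$'' is exactly the nontrivial part of the paper's Step~3 and needs an argument. In the joint recursion the dual error enters with factor $(1-\gamma_t\varepsilon_t)$, i.e.\ essentially undamped compared to the primal contraction, so applying Chung's lemma to the joint quantity only yields $\E\|\bmu(t)-\ab_{t-1}^*\|^2=O(t^{-(h-g-e)})$ and hence the weaker final exponent $\min\{2e,h-g-e\}$. The paper decouples by subtracting $(1-\tfrac{GE}{t^{g+e}})\E\|\blab(t+1)-\blab_t^*\|^2$ from both sides, so that only the \emph{difference} of consecutive dual-error bounds remains; this difference is $O(t^{-(1+h-g-e)})$, which is $O(t^{-h})$ precisely because $g+e<1$, and only then does Chung's lemma with the $\nu\gamma_t$ modulus give $O(t^{-(h-g)})$. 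Supplying this decoupling argument (and fixing the two bookkeeping points above) is necessary before your Step~4 triangle inequality with Lemma~\ref{lem:epsApprox} yields the stated $O\bigl(t^{-\min\{2e,h-g\}}\bigr)$.
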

\begin{proof}
Existence and uniqueness of a v-GNE in the game $\Gamma$ follows from Lemma~\ref{lem:exist_uncoupled} assertions 2)-3).
To obtain the convergence rate, we focus on the representation~\eqref{eq:pbavmuQ} and~\eqref{eq:dual} of Algorithm~\eqref{eq:alg}. 

\textbf{Step 1. Boundedness of iterates.} Our approach is to use the result from  Theorem 2.5.2 in \cite{NH}, as stated in Theorem \ref{app_bound}Appendix~\ref{th:bound} of this paper. In particular, we aim to show conditions of Theorem~\ref{th:bound}) hold for  $X(t) = \zb(t)$, $\xb=\zb$, $V(t,\xb) = \|\xb-\zb_{t-1}^*\|^2$   to conclude the  almost sure boundedness of the norm $\|\zb(t)-\zb_{t-1}^*\|$, and thus, the almost sure   boundedness of $\|\zb(t)\|$ for any $t$.

Taking into account stochastic nature of the terms  $\Qb(t)$, $\Sb(t)$, and $\Rb(t)$, their properties  (see Lemmas~\ref{lem:sample_grad}-\ref{lem:Sterm2moment}), we conclude that the procedure~\eqref{eq:pbavmuQ}-\eqref{eq:dual} can be considered as a perturbed gradient-based method to solve the variational inequality~\eqref{eq:VI2}. Thus, in the following we will estimate the distance between the algorithm's iterate at time $t+1$ and the solution $\zb^*_t=[\ab^*_t,\blab^*_t]$ to~\eqref{eq:VI2}, which exists and is unique for any $t$ (due to strong monotonicity of the regularized mapping $\Wb_t$). We obtain
\begin{align}\label{eq:ineq1}
&\|\zb(t+1)-\zb_t^*\|^2\le \|\bmu(t) -\gamma_t\big(\Wb^{pr}(\zb(t))+\Qb(t)\cr
&\qquad\qquad\qquad +\Rb(t)) - \ab_t^*\|^2\cr
&+\|\blab(t)-\gamma_t(-K\bmu(t) + \Sb(t) + \lb+\varepsilon_t\blab(t)) - \blab_t^*\|^2\cr
&\le2\gamma_t\langle\Wb_t(\zb_t^*),\zb(t) - \zb_t^*\rangle+\|\zb(t) - \zb_t^*\|^2\cr
&-2\gamma_t\langle\Wb^{pr}(\zb(t)), \bmu(t)-\ab^*_t\rangle \cr
&-2\gamma_t\langle-K\bmu(t) +  \lb + \varepsilon_t\blab(t),\blab(t)-\blab^*_t\rangle\cr 
& -2\gamma_t\langle\Qb(t) +\Rb(t),\bmu(t) - \ab_t^*\rangle\cr
&+\gamma_t^2\|\Wb^{pr}(\zb(t))+\Qb(t) +\Rb(t)\|^2 -2\gamma_t\langle\Sb(t),\blab(t)-\blab^*_t\rangle\cr
&+\gamma_t^2\|-K\bmu(t) + \Sb(t) + \lb+\varepsilon_t\blab(t)\|^2,
\end{align}
where the first inequality is due to the non-expansive property of the projection operator, whereas in the second inequality we used the fact that $\zb_t^*$ solves $VI_t$ and, thus, $\langle\Wb_t(\zb_t^*),\zb(t) - \zb_t^*\rangle\ge 0$. 
We notice that according to the definition of the mapping $\Wb_t$, the following holds for the third and fourth terms of the right hand side in~\eqref{eq:ineq1}:
\begin{align}\label{eq:eq1_1}
    &-2\g_t(\langle\Wb^{pr}(\zb(t)),\bmu(t)-\ab^*_t\rangle\cr
    &\qquad\qquad+ \langle-K\bmu(t) +  \lb + \varepsilon_t\blab(t),\blab(t)-\blab^*_t\rangle) \cr
    &=-2\g_t\langle\Wb_t(\zb(t)),\zb(t)-\zb_t^* \rangle.
\end{align}
Moreover, $\langle\Wb_t(\zb(t))-\Wb_t(\zb_t^*),\zb(t) - \zb_t^*\rangle \ge \nu\|\bmu(t)-\ab_t^*\|^2 + \varepsilon_t\|\blab(t)-\blab_t^*\|$.
Plugging this last inequality together with~\eqref{eq:eq1_1} into~\eqref{eq:ineq1}, we conclude that 
\begin{align}\label{eq:ineq2}
&\|\zb(t+1)-\zb_t^*\|^2\le(1-2\gamma_t\nu)\|\bmu(t)-\ab_t^*\|^2 \cr
&\qquad\qquad\qquad+ (1-2\gamma_t\varepsilon_t)\|\blab(t)-\blab_t^*\|^2\cr
&-\gamma_t\langle\Qb(t)+\Rb(t),\bmu(t) - \ab_t^*\rangle\cr
&+\gamma_t^2\|\Wb^{pr}({\zb}(t))+\Qb(t)+\Rb(t)\|^2-2\gamma_t\langle\Sb(t),\blab(t)-\blab^*_t\rangle\cr
&\qquad\qquad\qquad+\gamma_t^2\|-K\bmu(t) + \Sb(t) + \lb+\varepsilon_t\blab(t)\|^2.
\end{align}
Next, we take the conditional expectation with respect to $\EuScript F_t$ of  both sides of the inequality above to obtain: 
    \begin{align}\label{eq:CondExp}
    &\E\{\|\zb(t+1)-\zb_t^*\|^2|\EuScript F_t\}\le(1-2\gamma_t\nu)\|\bmu(t)-\ab_t^*\|^2 \cr
    &+ (1-2\gamma_t\varepsilon_t)\|\blab(t)-\blab_t^*\|^2\cr
    &+\gamma_t\E\{(\|\Qb(t)\|)|\EuScript F_t\}\|\bmu(t) - \ab_t^*\|+2\gamma_t^2\|\Wb^{pr}({\zb}(t))\|^2\cr
    &+2\gamma_t^2\E\{(\|\Qb(t)\|^2+\|\Rb(t)\|^2)|\EuScript F_t\}\cr
    &+2\gamma_t\E\{\|\Sb(t)\||\EuScript F_t\}\|\blab(t)-\blab^*_t\|+2\gamma_t^2(\|K\|^2\|\bmu(t)\|^2\cr
    &\qquad\qquad+\|\lb\|^2+\varepsilon^2_t\|\blab(t)\|^2 + \E\{\|\Sb(t)\|^2|\EuScript F_t\} ),
    \end{align}
where we used the Causchy-Schwarz inequality, and Lemma~\ref{lem:sample_grad} implying that $\E\{\langle\Rb(t),\bmu(t) - \ab_t^*\rangle | \EuScript F_t\} = \boldsymbol{0}$.
   Moreover, Lemma~\ref{lem:Rsq} and  the definition of $\Wb^{pr}$ (see~\eqref{eq:primalM}) together with Assumption~\ref{assum:infty} imply respectively that  
    $
        \E\{\|\Rb_c(t)\|^2|\EuScript F_t\} =  O(\|\zb(t)-\zb^*_{t-1}\|^2)
    $
    and
        $\|\Wb^{pr}({\zb}(t))\|^2 = O(\|\zb(t)-\zb^*_{t-1}\|^2)$.
        Thus, by applying these relations as well as the results in  \eqref{lem:Sterm2moment} to the inequality~\eqref{eq:CondExp}, we obtain: 
    \begin{align}\label{eq:CondExp1}
    &\E\{\|\zb(t+1)-\zb_t^*\|^2|\EuScript F_t\}\le(1-2\gamma_t\nu)\|\bmu(t)-\ab_t^*\|^2 \cr
    &\qquad\qquad\qquad+ (1-2\gamma_t\varepsilon_t)\|\blab(t)-\blab_t^*\|^2\cr
    &+O\left(\gamma_t\sigma_t+\g_t^2\right)(\|\zb(t)-\zb^*_{t-1}\|^2+1).
    \end{align}
Next, to get the right hand side above in terms of solely  $ \|\zb(t)-\zb^*_{t-1}\|^2$ we combine~\eqref{eq:CondExp1} with the following relations: 
 \begin{align*}
     &\|\bmu(t)-\ab_t^*\|\le \|\bmu(t)-\ab_{t-1}^*\| +\|\ab_{t}^* -\ab_{t-1}^*\|,\cr
     &\|\bmu(t)-\ab_{t-1}^*\| \le \|\bmu(t)-\ab_{t-1}^*\|^2 + 1,\cr
     &\|\blab(t)-\blab_t^*\|\le\|\blab(t)-\blab_{t-1}^*\| + \|\blab_{t-1}-\blab_t^*\|,\cr
     &\|\blab(t)-\blab_{t-1}^*\| \le  \|\blab(t)-\blab_{t-1}^*\|^2 + 1,\cr
     &\|\bmu(t)-\ab_t^*\|^2\le (1+0.5\gamma_t\nu)\|\bmu(t)-\ab_{t-1}^*\|^2\cr
     &\qquad\qquad\qquad+ \left(1+\frac{2}{\gamma_t\nu}\right)\|\ab_t^*-\ab_{t-1}^*\|^2,\cr
     &\|\blab(t)-\blab_t^*\|^2\le (1+0.5\gamma_t\varepsilon_t)\|\blab(t)-\blab_{t-1}^*\|^2 \cr
     &\qquad\qquad\qquad+ \left(1+\frac{2}{\gamma_t\varepsilon_t}\right)\|\blab^*_t-\blab_{t-1}^*\|^2,\cr
     &\|\ab^*_t-\ab^*_{t-1}\|^2 = O\left(\frac{(\varepsilon_t-\varepsilon_{t-1})^2}{\varepsilon_{t}}\right)  \mbox{(see Lemma~\ref{lem:t_vs_t-1}),} \cr
     &\|\blab^*_t-\blab^*_{t-1}\|^2 = O\left(\frac{(\varepsilon_t-\varepsilon_{t-1})^2}{\varepsilon^2_{t}}\right) \mbox{(see Lemma~\ref{lem:t_vs_t-1})}.
 \end{align*}
Thus, for sufficiently large $t$,
 \begin{align}\label{eq:2}
 & \E\{\|\zb(t+1)-\zb_t^*\|^2|\EuScript F_t\}\le(1-\gamma_t\nu)\|\bmu(t)-\ab_{t-1}^*\|^2 \cr
 &\qquad\qquad\qquad+ (1-\gamma_t\varepsilon_t)\|\blab(t)-\blab_{t-1}^*\|^2\cr
 &+O\left(\frac{(\varepsilon_t-\varepsilon_{t-1})^2}{\gamma_t\varepsilon^3_{t}}+\gamma_t\sigma_t+\g_t^2\right)(\|\zb(t)-\zb_{t-1}^*\|^2+1)\cr
 &\le
  (1-\gamma_t\varepsilon_t)\|\zb(t)-\zb_{t-1}^*\|^2\\
  \nonumber
 &+O\left(\frac{(\varepsilon_t-\varepsilon_{t-1})^2}{\gamma_t\varepsilon^3_{t}}+\gamma_t\sigma_t+\g_t^2\right)(\|\zb(t)-\zb_{t-1}^*\|^2+1).
\end{align}
Given the settings of the time-dependent parameters namely $m=\min\{2-g-e,  g+s, 2g\}>1$, we have
    $\sum_{t=1}^{\infty}\frac{(\varepsilon_t-\varepsilon_{t-1})^2}{\gamma_t\varepsilon^3_{t}}+\gamma_t\sigma_t+\g_t^2<\infty$.
    With the inequality above in place, we are ready to apply Theorem~\ref{th:bound} to conclude  boundedness of $\|\zb(t)\|$ for any $t$ almost surely.
    
\textbf{Step 2. Convergence rate of $\E\|\blab(t+1)-\blab_{t}^*\|^2$.} 
By taking the full expectation of both sides in~\eqref{eq:2}, we get, for sufficiently large $t$:
\begin{align}\label{eq:c=1}
&\E\|\zb(t+1)-\zb_t^*\|^2\le\left(1-\frac{G\nu}{t^{g}}\right)\E\|\bmu(t)-\ab_{t-1}^*\|^2 \cr
&\qquad+ \left(1-\frac{GE}{t^{g+e}}\right)\E\|\blab(t)-\blab_{t-1}^*\|^2+O\left(\frac{1}{t^{m}}\right)\cr
&\le\left(1-\frac{GE}{t^{g+e}}\right)\E\|\zb(t)-\zb_{t-1}^*\|^2+O\left(\frac{1}{t^{m}}\right).
\end{align}
Thus, according to  Chung's lemma (see Lemma~\ref{lem:chung} in Appendix~\ref{app2}), we conclude that $\E\|\zb(t)-\zb^*_{t-1}\|^2 = O(1/t^{m-g-e})$. Thus, 
$$\E\|\blab(t+1)-\blab_{t}^*\|^2 = O(1/t^{m-g-e}).$$

\textbf{Step 3. Convergence rate of $\E\|\bmu(t)-\ab_{t-1}^*\|^2$.} Using this last relation together with the relation $\E\|\zb(t+1)-\zb_t^*\|^2\ge \E\|\bmu(t+1)-\ab_{t}^*\|^2+\left(1-\frac{GE}{t^{g+e}}\right)\E\|\blab(t+1)-\blab_{t}^*\|^2$, which holds if $\frac{GE}{t^{g+e}}\in[0,1]$, we get for sufficiently large $t$ that
\begin{align}\label{eq:c=1_1}
    &\E\|\bmu(t+1)-\ab_{t}^*\|^2\le\left(1-\frac{G\nu}{t^{g}}\right)\E\|\bmu(t)-\ab_{t-1}^*\|^2 \cr
    &+ \left(1-\frac{GE}{t^{g+e}}\right)\left( O\left(\frac{1}{t^{m-g-e}}\right)-O\left(\frac{1}{(t+1)^{m-g-e}}\right)\right)\cr
&\qquad\qquad\qquad+O\left(\frac{1}{t^{m}}\right)\cr
&= \left(1-\frac{G\nu}{t^{g}}\right)\E\|\bmu(t)-\ab_{t-1}^*\|^2 +O\left(\frac{1}{t^{m}}\right),
\end{align}
since $O\left(\frac{1}{t^{m-g-e}}\right)-O\left(\frac{1}{(t+1)^{m-g-e}}\right) = O\left(\frac{1}{t^{1+m-g-e}}\right)$ and $e+g<1$. Thus, applying  Chung's Lemma~\ref{lem:chung} to~\eqref{eq:c=1_1}, we obtain \begin{align}\label{eq:rate1}
    \E\|\bmu(t)-\ab_{t-1}^*\|^2 = O\left(\frac{1}{t^{m-g}}\right).
\end{align}

\textbf{Step 4. Convergence rate of $\E\|\bmu(t)-\ab^*\|^2$.}
Finally,~\eqref{eq:rate1} together with Lemma~\ref{lem:epsApprox} implies the result of the theorem, namely, 
$
    \E\|\bmu(t)-\ab^*\|^2 =O\left(\frac{1}{t^{\min\{2e,m-g\}}}\right).
$

\end{proof}
Optimizing the parameters $\gamma_t$, $\varepsilon_t$, and $\sigma_t$ in Theorem~\ref{th:main}, we get the following result for the convergence rate.
\begin{corrolary}\label{cor:1}
Let Assumptions~\ref{assum:convex}-\ref{assum:infty} hold. In Algorithm~\eqref{eq:alg} choose the parameters  as follows:
$\gamma_t = \frac{G}{t^{4/7}}$, $\varepsilon_t=\frac{E}{t^{2/7}}$, $\sigma_t=\frac{S}{t^s}$, $s\ge 4/7$.
Moreover, let the joint action $\ab^*$ be the unique variational generalized Nash equilibrium of the game $\Gamma$. 
Then for the iterates in Algorithm~\eqref{eq:alg} the following holds: 
\begin{align*}
    \E\|\bmu(t)-\ab^*\|^2 =O\left(\frac{1}{t^{4/7}}\right).
\end{align*}
\end{corrolary}
\begin{remark}
    Observe that the above result is the first convergence rate estimation in zeroth-order learning of GNE. Unfortunately (and naturally) it is worse than the best rate $O(1/t)$ attainable in the zeroth-order two-point feedback strongly monotone  games with uncoupled constraints ~\cite{TatKamECC24}. 
    We emphasize here that establishing convergence rates of algorithms converging to GNE in games is a challenging problem. The work~\cite{jordan2023first} proves the rate $O(1/t^2)$ for the case of strongly monotone games with affine constraints, under exact first-order information given a specifically defined surrogate of a GNE solution; while the best rate in strongly monotone games with uncoupled constraints under the first-order information settings is $O\left(\exp\{-\frac{t}{\gamma^2}\}\right)$ with $\gamma = L/\nu$, see~\cite{NesterovScrimali}. 
\end{remark}


\section{Numerical example}
\label{sec:example}

Let the uncoupled and coupled action sets in a two-player game be defined as follows: $ A^1 = A^2 = \R$,  and $C = \{\ab=[a^1,a^2]:\, a^1+a^2\ge 1\}$ respectively, whereas the cost functions are $J^1(a^1,a^2) = \frac{3}{2}(a^1)^2 + a^1a^2$ and $J^2(a^1,a^2) = \frac{1}{2}(a^2)^2 - a^1a^2$. The game satisfies Assumptions~\ref{assum:convex}-\ref{assum:infty}. Moreover, the variational GNE is the joint action $\ab^* = [0,1]$ with the corresponding dual variable $\lambda^* = 1$ (a solution to the game $\Gamma^{\blab}$). 
\begin{figure}[!htb]
	\centering
	\includegraphics[width=0.5\textwidth]{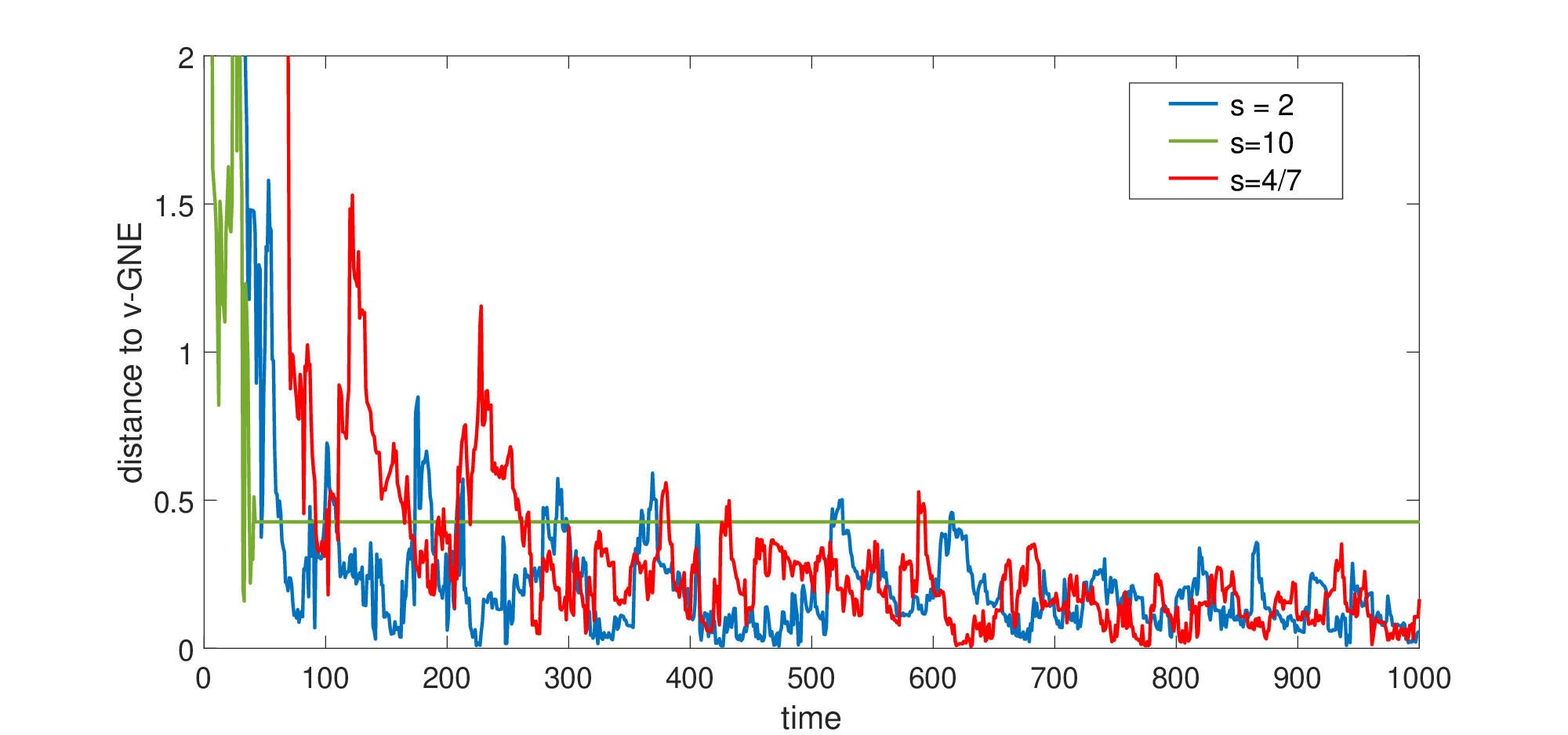}
	\caption{\label{eps:compare}Convergence of Algorithm~\eqref{eq:alg} for different settings of $\sigma_t$.}
\end{figure}
We implemented the procedure of Algorithm~\eqref{eq:alg} for the following settings of the parameters: $\gamma_t = \frac{1}{t^{4/7}}$, $\varepsilon_t = \frac{1}{t^{2/7}}$, and $\sigma_t = \frac{1}{t^{s}}$, where $s = 4/7$ (red line), $s = 2$ (blue line), and $s = 10$ (green line). All of these parameters satisfy the conditions of Theorem~\ref{th:main}. As we can see, Algorithm~\eqref{eq:alg} demonstrates convergence in the cases $s=4/7$ and $s = 2$, while for the case $s=10$ the procedure does not make any significant progress after approximately 50 iterations. This is due to a very small variance $\sigma_t$, resulting in the sampled action to be almost equal to the mean value ($a^i = \mu^i$). Consequently, it leads to extremely small value of the gradient estimate $m^i$. Hence, the optimal tuning of  $\sigma_t$ is an important direction of future work. 

\section{Conclusion}
We derived the convergence rate of a payoff-based approach to learn the variational equilibrium in  a class of GNE problems. The considered class consists of games with strongly monotone pseudo-gradient and jointly linear coupling constraints. Given that the current work is the first one characterizing the convergence rate of payoff-based learning in GNE problems, future work should explore potential improvements in the convergence rate. Furthermore, future research should explore how to optimally tune the variance of the sampling approach in the gradient estimation procedure of the algorithm.

\bibliographystyle{plain}
\bibliography{srtrMonGames_ref}

\appendix
\subsection{Proof of Lemma~\ref{lem:exist_uncoupled}}\label{app:exist_uncoupled}
\begin{proof} According to the definition of variational GNE (see Definition~\ref{def:VGNE}), $\ab^*$ is variational GNE if and only if it solves~\eqref{eq:GVI}.  
Thus,  assertion 1) holds due to Proposition 1.3.4. in~\cite{FaccPang1}. 

The existence of a unique variational GNE in $\Gamma$ is guaranteed under Assumption~\ref{assum:strmon}, as there exists a unique solution to~\eqref{eq:GVI} if the mapping $\Mb$ is strongly monotone. Thus,  assertion 3) is implied. Moreover, given Assumptions~\ref{assum:convex} and~\ref{assum:Slaters} and taking into account assertion 1),  assertion 2) follows. 
Finally,  assertion 4) holds under the Slater's constraint qualification condition in Assumption~\ref{assum:Slaters}, as demonstrated in Lemma~5.1 in \cite{ZhuFrazzoli}.
\end{proof}

\subsection{Proofs of Lemmas~\ref{lem:epsApprox} and~\ref{lem:t_vs_t-1}}\label{app1}
\begin{proof}(Proof of Lemma~\ref{lem:epsApprox}.)
    According to the definition of $\Wb_t$ and  given $\zb^* = [\ab^*, \blab^*]$, $\zb^*_t =  [\ab^*_t, \blab^*_t]$, we have
    \begin{align*}
        &\langle \Wb(\zb^*),\zb^*_t-\zb^*\rangle\ge 0,\cr
        &\langle \Wb_t(\zb_t^*),\zb^*-\zb_t^*\rangle=\langle \Wb(\zb_t^*)+\varepsilon_tI_{0,n}\zb_t^*,\zb^*-\zb_t^*\rangle\ge 0.
    \end{align*}
    By summing up the inequalities above and using~\eqref{eq:aug-psedo}, which holds due to Assumption~\ref{assum:strmon}, we obtain
    \begin{align}\label{eq:sum}
        0\le -\nu\|\ab^*-\ab^*_t\|^2 +\varepsilon_t\langle\blab^*_t,\blab^*-\blab^*_t\rangle,
    \end{align}
    which implies
    $
        \nu\|\ab^*-\ab^*_t\|^2 +\varepsilon_t\|\blab^*-\blab^*_t\|^2\le\varepsilon_t\langle\blab^*\blab^*-\blab^*_t\rangle
    $.
    Thus, 
    $
        \varepsilon_t\|\blab^*-\blab^*_t\|^2\le\varepsilon_t\|\blab^*\|\|\blab^*-\blab^*_t\|$,
       implying 
   $\|\blab^*-\blab^*_t\|\le\|\blab^*\|$. Hence, from Lemma \ref{app:exist_uncoupled} assertion 4), we have uniform boundedness of $\|\blab^*-\blab^*_t\|$. Moreover, as $\blab^* = \Proj_{\R^n_{+}}[\blab^* + \alpha \gb(\ab^*)]$, $\blab_t^* = \Proj_{\R^n_{+}}[\blab_t^* + \alpha (\gb(\ab_t^*) + \varepsilon_t\blab_t^*)]$ for any $\alpha\ge 0$, and the projection is a continuous mapping, we conclude that  $\lim_{\varepsilon_t\to 0}\|\blab^*-\blab^*_t\| = 0$, where $[\ab^*,\blab^*]$ is  the minimal norm solution to~\eqref{eq:VI1}. On the other hand,~\eqref{eq:sum} together with the previous relation implies
   \begin{align}\label{eq:app1_1}
        \nu\|\ab^*-\ab^*_t\|^2 \le\varepsilon_t\|\blab^*\|\|\blab^*-\blab^*_t\|\le\varepsilon_t\|\blab^*\|^2.
    \end{align}
    On the other hand, according to~\eqref{def:mapping_ext} and~\eqref{eq:primalM} and due to the fact that $[\ab^*,\blab^*]$ solves~\eqref{eq:VI1}, we conclude that
$\Wb^{pr}(\ab^*, \blab^*) = \Mb(\ab^*) + K^T\blab^* = \boldsymbol{0}$.
Moreover,  
$\Wb^{pr}(\ab_t^*) = \Mb(\ab_t^*) + K^T \blab_t^* = \boldsymbol{0}$, as $[\ab_t^*,\blab_t^*]$ solves~\eqref{eq:VI2}.
Hence, 
$\Mb(\ab^*) + K^T\blab^* = \Mb(\ab_t^*) + K^T\blab_t^*$, which implies that 
\begin{align*}
    \|\blab^*-\blab_t^*\|
    \|K\| = \|\Mb(\ab^*) - \Mb(\ab_t^*)\| \le L\|\ab^*-\ab_t^*\|,
\end{align*}
where the last inequality is due to existence of the Lipschitz constant $L$ for the mapping $\Mb$ (see Remark~\ref{rem:Lip}). Next, using~\eqref{eq:app1_1}, we conclude that 
\[\|\ab^*-\ab^*_t\|^2 \le\varepsilon_t\frac{\|\blab^*\|\|\blab^*-\blab^*_t\|}{\nu}\le\varepsilon_t\frac{\|\blab^*\|L\|\ab^*-\ab_t^*\|}{\|K\|\nu}.\]
Thus, $\|\ab^*-\ab^*_t\|\le \varepsilon_t\frac{\|\blab^*\|L}{\|K\|\nu}$.
\end{proof}

\begin{proof}(Proof of Lemma~\ref{lem:t_vs_t-1}.) According to 
the definition of $\Wb_t$ and  given $\zb^*_t = [\ab^*, \blab^*]$, we have
    \begin{align*}
        &\langle \Wb(\zb_{t-1}^*)+\varepsilon_{t-1}I_{0,n}\zb_{t-1}^*,\zb^*_t-\zb_{t-1}^*\rangle\ge 0,\cr
        &\langle \Wb(\zb_t^*)+\varepsilon_tI_{0,n}\zb_t^*,\zb_{t-1}^*-\zb_t^*\rangle\ge 0.
    \end{align*}
    By summing up the inequalities above and using~\eqref{eq:aug-psedo}, we obtain
    \begin{align}\label{eq:sum2}
        &0\le-\nu\|\ab^*_{t-1}-\ab^*_t\|^2 -\varepsilon_t\|\blab^*_{t-1}-\blab^*_t\|^2 \cr
        &\qquad+ (\varepsilon_{t-1}-\varepsilon_t)\langle \blab_{t-1}^*,\blab^*_t-\blab_{t-1}^*\rangle,
    \end{align}
    which implies, on the one hand,
    $\varepsilon_t\|\blab^*_{t-1}-\blab^*_t\|^2 \le |\varepsilon_{t-1}-\varepsilon_t| \|\blab_{t-1}^*\|\|\blab^*_t-\blab_{t-1}^*\|$.
       Thus, due to the boundedness of $\|\blab_{t-1}^*\|$ (see the inequality $\|\blab^*-\blab^*_t\|\le\|\blab^*\|$ in the proof of Lemma~\ref{lem:epsApprox}), we conclude that
    $\|\blab^*_t-\blab^*_{t-1}\|^2 = O\left(\frac{(\varepsilon_t-\varepsilon_{t-1})^2}{\varepsilon^2_{t}}\right)$.
    On the other hand, the relation above together with~\eqref{eq:sum2}, implies
    $\|\ab^*_t-\ab^*_{t-1}\|^2 = O\left(\frac{(\varepsilon_t-\varepsilon_{t-1})^2}{\varepsilon_{t}}\right)$.
\end{proof}

\subsection{Proof of Lemma~\ref{lem:Rsq}}\label{app:Rsq}
\begin{proof}
According to Lemma~\ref{lem:sample_grad}, 
     \begin{align}\label{eq:R_2}
        \E\{\|\Rb^i(t)\|^2|\EuScript F_t\}\le\E\{\|{\mb}^i(t)\|^2|\EuScript F_t\}.
    \end{align}
    Moreover, the definition of ${\mb}^i(t)$ in~\eqref{eq:est_Gd2} implies that 
    \begin{align*}
        &\E\{\|{\mb}^i(t)\|^2|\EuScript F_t\} = \E\{(U^{i}(\ab(t),\blab(t)) - U^{i}(\bmu(t),\blab(t)))^2\cr
        &\qquad\qquad\times\frac{\|\ab^i(t) -\bmu^i(t)\|^2}{\sigma^4_t}|\EuScript F_t\}\cr
        &\le\frac{\E\{(U^{i}(\ab(t),\blab(t)) - U^{i}(\bmu(t),\blab(t)))^2|\EuScript F_t\}}{\sigma_t^2},
    \end{align*}
    where we used the H\"older's inequality. Next, applying the Mean Value Theorem, we conclude that 
    \begin{align}
    &\E\{(U^{i}(\ab(t),\blab(t)) - U^{i}(\hbmu(t),\blab(t)))^2|\EuScript F_t\}\cr
    & = \E\{(\langle\nabla_{\ab}U^{i}(\boldsymbol{\theta}(t),\blab(t)), \ab(t) - \hbmu(t)\rangle)^2|\EuScript F_t\}\cr
    &\le \E\{\|\langle\nabla_{\ab}U^{i}(\boldsymbol{\theta}(t),\blab(t))\|^2|\EuScript F_t\}\sigma_t^2,
    \end{align}
    where $\boldsymbol{\theta}(t) = \ab(t) + \theta(\hbmu(t)-\ab(t))$ for some $\theta\in[0,1]$, the inequality is again due to the Cauchy-Schwarz and H\"older inequalities, respectively. Finally, taking into account Assumption~\ref{assum:infty} and the definition of the function $U^i$ (see~\eqref{eq:J^{pr}r}), we conclude that 
    \begin{align}\label{eq:R_2f}
        \E\{\|\nabla_{\ab}U^{i}(\boldsymbol{\theta}(t),\blab(t))\|^2|\EuScript F_t\} = f^i(\bmu(t),\blab(t)),
    \end{align}
    where $f^i(\bmu(t),\blab(t))$ is quadratic in both $\bmu(t)$ and $\blab(t)$.
    Thus, we conclude the result from~\eqref{eq:R_2}-\eqref{eq:R_2f}.
\end{proof}
\subsection{Theorem 2.5.2 in \cite{NH}}\label{app_bound}
\begin{theorem}\label{th:bound}
Let $V(t,X)$ be a real-valued function defined for some  stochastic process $\{X(t)\}_t$ taking values in $\R^d$. Let us assume existence of the following generating operator 
\begin{align*}
LV(t,x)=E[V(t+1, X(t+1))\mid X(t)=x]-V(t,x),
\end{align*}
for which the following decay holds: 
\begin{align*}
LV(t,x)\le -\alpha(t+1)\psi(x) + \phi(t)(1+V(t,x)),
\end{align*}
where $\psi\ge 0$ on $ \R^{d}$, $\phi(t)>0$, $\forall t$, $\sum_{t=0}^{\infty}\phi(t)<\infty$, $\alpha(t)>0$, $\sum_{t=0}^{\infty} \alpha(t)= \infty$. Then the process $X(t)$ is almost surely bounded over $t$.
\end{theorem}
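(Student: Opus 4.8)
The plan is to read the hypothesized drift inequality as the defining relation of a nonnegative almost-supermartingale, invoke the Robbins--Siegmund convergence theorem to get almost sure convergence of $V(t,X(t))$ to a finite limit, and then convert boundedness of the Lyapunov function into boundedness of $X(t)$ via its radial unboundedness. Concretely, I would fix the natural filtration $\EuScript F_t=\sigma(X(0),\ldots,X(t))$ and set $V_t:=V(t,X(t))$, which is $\EuScript F_t$-measurable and, as a Lyapunov function in the framework of \cite{NH}, nonnegative. Evaluating the generating-operator bound at $x=X(t)$ and taking the conditional expectation given $\EuScript F_t$ turns the hypothesis into
\begin{align*}
\E\{V_{t+1}|\EuScript F_t\}\le (1+\phi(t))V_t+\phi(t)-\alpha(t+1)\psi(X(t)).
\end{align*}
The term $-\alpha(t+1)\psi(X(t))$ is nonpositive since $\alpha(t)>0$ and $\psi\ge 0$, so it only reinforces the decay; for the \emph{boundedness} conclusion the divergence $\sum_t\alpha(t)=\infty$ plays no role and could be dropped, entering only the companion stability statements where it forces $\liminf_t\psi(X(t))=0$.

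Next I would apply the Robbins--Siegmund nonnegative almost-supermartingale theorem with $\beta_t=\phi(t)$, $\xi_t=\phi(t)$ and $\zeta_t=\alpha(t+1)\psi(X(t))$, all nonnegative and $\EuScript F_t$-measurable. Because $\sum_t\beta_t=\sum_t\xi_t=\sum_t\phi(t)<\infty$ deterministically, the theorem yields that $V_t$ converges almost surely to a finite limit and that $\sum_t\alpha(t+1)\psi(X(t))<\infty$ almost surely. If a self-contained derivation is wanted in place of the citation, I would pass to $\tilde V_t=V_t/B_{t-1}$ with $B_t=\prod_{s=0}^{t}(1+\phi(s))$; since $\sum_t\phi(t)<\infty$ the product $B_t$ increases to a finite limit, the compensated process $\tilde V_t-\sum_{s<t}\phi(s)/B_s$ is a supermartingale bounded below, and the classical supermartingale convergence theorem supplies the almost sure limit of $\tilde V_t$, hence of $V_t=B_{t-1}\tilde V_t$.

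Convergence of $V_t$ to a finite limit gives $\sup_t V_t<\infty$ along almost every sample path. The decisive final step converts boundedness of $V_t=V(t,X(t))$ into boundedness of $X(t)$, for which I would invoke the radial unboundedness of $V$ standing in \cite{NH}: there is a nondecreasing $w:[0,\infty)\to[0,\infty)$ with $w(r)\to\infty$ as $r\to\infty$ and $V(t,x)\ge w(\|x\|)$ for all $t$ and $x$. Then $w(\|X(t)\|)\le V_t\le\sup_s V_s<\infty$ almost surely, and radial unboundedness of $w$ forces $\sup_t\|X(t)\|<\infty$ almost surely.

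The main obstacle I anticipate is exactly this last link: the abbreviated hypotheses reproduced here record only the sign and summability conditions on $\psi$, $\phi$, $\alpha$ and do not state the coercivity of $V$ in $x$, yet without some uniform radial growth of the Lyapunov function one cannot pass from a bounded $V_t$ to a bounded $X(t)$. I would therefore make the radial-unboundedness hypothesis explicit, as in the original statement in \cite{NH}; once the drift inequality is cast in Robbins--Siegmund form, the supermartingale bookkeeping and the almost sure convergence are routine.
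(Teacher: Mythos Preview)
The paper does not prove this statement at all: Theorem~\ref{th:bound} is simply recorded in Appendix~\ref{app_bound} as a quotation of Theorem~2.5.2 from \cite{NH}, with no argument given. So there is no ``paper's own proof'' to compare against; your proposal is supplying a proof where the authors chose to cite.

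Your Robbins--Siegmund route is the standard and correct way to establish this type of result, and the bookkeeping you outline (passing to $\tilde V_t = V_t/B_{t-1}$ with $B_t=\prod_{s\le t}(1+\phi(s))$, or invoking Robbins--Siegmund directly) is sound. You are also right that the divergence condition $\sum_t\alpha(t)=\infty$ is irrelevant for the boundedness conclusion and only enters the companion stability/convergence statements in \cite{NH}.

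Most importantly, you have put your finger on a genuine omission in the paper's restatement: the hypotheses reproduced here say nothing about $V$ being nonnegative or radially unbounded in $x$, and without a coercivity condition of the form $V(t,x)\ge w(\|x\|)$ with $w(r)\to\infty$ one simply cannot pass from $\sup_t V(t,X(t))<\infty$ to $\sup_t\|X(t)\|<\infty$. The original Theorem~2.5.2 in \cite{NH} carries exactly such standing assumptions on the Lyapunov function; the paper's abbreviated transcription drops them. In the paper's actual application (Step~1 of the proof of Theorem~\ref{th:main}) this causes no trouble, since there $V(t,\xb)=\|\xb-\zb_{t-1}^*\|^2$ with $\{\zb_{t-1}^*\}$ bounded, so nonnegativity and radial unboundedness are manifest. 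Your proposal is therefore correct once the coercivity hypothesis is made explicit, and your identification of this gap is on target.
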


\subsection{Chung's Lemma}\label{app2}
Here we formulate the result from Lemma 4 in \cite{Chung}.
\begin{lem}\label{lem:chung}
Suppose that $\{b_n\}$, $n\ge1$, is a sequence of real numbers such that for $n\ge n_0$,
\[b_{n+1}\le\left(1-\frac{c_n}{n^s}\right)b_n+\frac{c'}{n^t},\]
where $0<s<1$, $c_n\ge c>0$, $c'>0$. Then
\[\limsup_{n\to\infty}n^{t-s}b_n\le \frac{c'}{c}.\]
\end{lem}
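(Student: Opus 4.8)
The plan is to prove the estimate by a direct comparison argument, benchmarking the sequence $\{b_n\}$ against the explicit envelope $v_n = A\,n^{-(t-s)}$ for a constant $A$ slightly larger than $c'/c$, and then letting $A\downarrow c'/c$. Write $\theta = t-s$ (the interesting, decaying, regime being $\theta>0$, which is exactly how the lemma is invoked, with $t=m$ and $s=g+e$). First I would fix $\epsilon>0$, set $A = c'/c + \epsilon$, and verify that this envelope is a \emph{supersolution} of the recursion for all large $n$: expanding $(1-c_n/n^s)v_n + c'/n^t = A\,n^{-\theta} - (A c_n - c')\,n^{-t}$ and using $c_n\ge c$ (so that $A c_n - c' \ge \epsilon c > 0$) together with the elementary bound $n^{-\theta}-(n+1)^{-\theta}\le \theta\,n^{-\theta-1}$, this reduces to the single requirement $A\theta\,n^{s-1}\le \epsilon c$, which holds for every $n\ge N_1$ since $s<1$ forces $n^{s-1}\to 0$. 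Thus $(1-c_n/n^s)v_n + c'/n^t \le v_{n+1}$ for all $n\ge N_1$.

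With the supersolution property in hand, I would argue by a dichotomy on whether the iterate ever falls below the envelope. In the first case, if $b_N\le v_N$ for some $N\ge N_1$, a one-line induction propagates $b_n\le v_n$ for all $n\ge N$: since $1-c_n/n^s\ge 0$ for large $n$, the hypothesis $b_n\le v_n$ gives $b_{n+1}\le (1-c_n/n^s)v_n + c'/n^t \le v_{n+1}$, so $n^\theta b_n \le A$ eventually. In the complementary case, where $b_n>v_n$ for all $n\ge N_1$, I would track the excess $e_n = b_n - v_n > 0$; subtracting the supersolution inequality from the recursion cancels the $c'/n^t$ term and yields $e_{n+1}\le (1-c_n/n^s)e_n \le (1-c/n^s)e_n$. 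Since $\sum_n c/n^s = \infty$ (again $s<1$), the product $\prod_k (1-c/k^s)$ decays super-polynomially, so $n^\theta e_n\to 0$ and therefore $n^\theta b_n = A + n^\theta e_n \to A$. In either branch I obtain $\limsup_{n\to\infty} n^{t-s}b_n \le A = c'/c + \epsilon$, and letting $\epsilon\downarrow 0$ gives the claim.

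The main obstacle I anticipate is not any single estimate but getting the bookkeeping of the two cases exactly right: the comparison must be arranged so that \emph{neither} branch secretly requires an a priori bound on $b_n$ (this is the advantage of the excess-sequence trick in the second case over a naive boundedness-then-refinement argument), and the decay rate of $\prod(1-c/k^s)$ must be confirmed to beat the polynomial factor $n^\theta$ — which it does, via $\prod_{k=N_1}^{n}(1-c/k^s)\le \exp(-c\sum_{k=N_1}^n k^{-s})$ and $\sum_{k} k^{-s}\sim n^{1-s}/(1-s)$. A secondary point I would state explicitly is that $1-c_n/n^s\ge 0$ eventually, which is used for the monotonicity step; this is immediate when $c_n$ is bounded (the case in the applications of the lemma, where $c_n$ is constant), and more generally holds whenever $c_n = o(n^s)$.
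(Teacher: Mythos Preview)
The paper does not supply its own proof of this lemma: it is stated in Appendix~\ref{app2} purely as a citation of Lemma~4 in Chung's paper, with no argument given. So there is no ``paper's proof'' to compare against; the task reduces to checking whether your argument stands on its own.

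Your comparison/supersolution approach is correct and is essentially the classical way Chung-type estimates are established. The computation $(1-c_n/n^s)v_n + c'/n^t = A n^{-\theta} - (Ac_n-c')n^{-t}$ together with $n^{-\theta}-(n+1)^{-\theta}\le \theta n^{-\theta-1}$ reduces the supersolution property to $A\theta n^{s-1}\le \epsilon c$, which holds eventually since $s<1$; the induction in Case~1 and the excess-sequence estimate $e_{n+1}\le(1-c/n^s)e_n$ in Case~2 then deliver $\limsup n^{t-s}b_n\le c'/c+\epsilon$ exactly as you describe. The product bound $\prod_{k}(1-c/k^s)\le \exp(-c\sum k^{-s})$ with $\sum k^{-s}\asymp n^{1-s}/(1-s)$ indeed kills any polynomial prefactor $n^\theta$, so Case~2 closes.

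The only genuine caveat is the one you already flag: both the induction step in Case~1 and the contraction in Case~2 use $1-c_n/n^s\ge 0$, which the lemma's hypotheses (only $c_n\ge c$) do not literally guarantee. As you note, in every invocation of the lemma in this paper (Steps~2 and~3 of the proof of Theorem~\ref{th:main}) the sequence $c_n$ is a constant ($G\nu$ or $GE$), so the issue never arises in practice. If you wanted a version robust to the stated hypotheses, you would have to either add the assumption $c_n=o(n^s)$ explicitly or note that the recursion with $1-c_n/n^s<0$ and no lower bound on $b_n$ gives no control anyway.
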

\end{document}